\newcommand*{\mailto}[1]{\href{mailto:#1}{\nolinkurl{#1}}}
\newcommand{\arxiv}[1]{\href{http://arxiv.org/abs/#1}{arXiv:#1}}
\newtheorem{theorem}{Theorem}[section]
\newtheorem{lemma}[theorem]{Lemma}
\newcommand{\R}{{\mathbb R}}
\newcommand{\Z}{{\mathbb Z}}
\newcommand{\cR}{{\mathcal R}}
\newcommand{\Kt}{\widetilde{K}}
\newcommand{\A}{\mathcal{A} }
\newcommand{\nn}{\nonumber}
\newcommand{\I}{\mathrm{i}}
\newcommand{\E}{\mathrm{e}}
\newcommand{\beq}{\begin{equation}}
\newcommand{\eeq}{\end{equation}}
\newcommand{\floor}[1]{\lfloor#1 \rfloor}
\newcommand{\vp}{\varphi}
\newcommand{\vpt}{\widetilde{\varphi}}
\numberwithin{equation}{section}
\begin{document}

\title[Scattering and Dispersion Estimates for Jacobi Operators]{Properties of the Scattering Matrix and Dispersion Estimates for Jacobi Operators}

\author[I.\ Egorova]{Iryna Egorova}
\address{B. Verkin Institute for Low Temperature Physics\\ 47, Lenin ave\\ 61103 Kharkiv\\ Ukraine}
\email{\href{mailto:iraegorova@gmail.com}{iraegorova@gmail.com}}

\author[M.\ Holzleitner]{Markus Holzleitner}
\address{Faculty of Mathematics\\ University of Vienna\\Oskar-Morgenstern-Platz 1\\ 1090 Wien\\ Austria}
\email{\href{mailto:amhang1@gmx.at}{amhang1@gmx.at}}

\author[G.\ Teschl]{Gerald Teschl}
\address{Faculty of Mathematics\\ University of Vienna\\
Oskar-Morgenstern-Platz 1\\ 1090 Wien\\ Austria\\ and International Erwin Schr\"odinger
Institute for Mathematical Physics\\ Boltzmanngasse 9\\ 1090 Wien\\ Austria}
\email{\mailto{Gerald.Teschl@univie.ac.at}}
\urladdr{\url{http://www.mat.univie.ac.at/~gerald/}}

\thanks{J. Math. Anal. Appl. {\bf 434}, 956--966 (2016)}
\thanks{{\it Research supported by the Austrian Science Fund (FWF) under Grants No.\ Y330, V120, and W1245}}

\keywords{Jacobi operator, dispersive estimates, scattering, resonant case}
\subjclass[2010]{Primary 35Q41, 34L25; Secondary 81U30, 47B36}

\begin{abstract}
We show that for a Jacobi operator with coefficients whose $(j+1)$'th moments are summable the $j$'th derivative of the scattering matrix is in the Wiener algebra of functions with summable Fourier coefficients. We use this result to improve the known dispersive estimates with integrable time decay for the time dependent Jacobi equation
in the resonant case.
\end{abstract}

\maketitle

%%%%%%%%%%%%%%%%%%%%%%%%%%%%%%%%%%%%%%%%%%%%%%%%%%%%%%%%%%%%%%%%%%%%%%%%%%
\section{Introduction}
%%%%%%%%%%%%%%%%%%%%%%%%%%%%%%%%%%%%%%%%%%%%%%%%%%%%%%%%%%%%%%%%%%%%%%%%%%%%%

This paper is concerned with Jacobi operators
\beq\label{jaceq}
H u(n)=a(n-1)u(n-1) + b(n) u(n) + a(n) u(n+1), \qquad n \in \Z,
\eeq
satisfying $a(n)\to 1/2$, $b(n)\to 0$ such that
\beq \label{mainas}
\Big|a(n)-\frac 1 2 \Big|+ |b(n)| \in \ell^1_{\sigma}(\Z)
\eeq
for some $\sigma\ge 1$. Here $\ell^p_\sigma(\Z)$ is the set of all doubly infinite sequences for which the norm
\[
   \Vert u\Vert_{\ell^p_{\sigma}}= \begin{cases} \left( \sum_{n\in\Z} (1+|n|)^{p\sigma} |u(n)|^p\right)^{1/p}, & \quad p\in[1,\infty),\\
   \sup_{n\in\Z} (1+|n|)^{\sigma} |u(n)|, & \quad p=\infty, \end{cases}
\]
is finite. The case $\sigma=0$ corresponds to the usual unweighted spaces $\ell^p_0\equiv\ell^p$.

The special case $a(n)\equiv 1$ is also known as the discrete Schr\"odinger operator.
If \eqref{mainas} holds for $\sigma=1$ it is well known (\cite{tjac}) that the spectrum of $H$ consists of a purely absolutely continuous part
covering $[-1,1]$ plus a finite number of eigenvalues located in $\R\setminus[-1,1]$.
In addition, there could be resonances at the edges of the continuous spectrum,
which means that there exist corresponding bounded solutions (and which is equivalent to the fact,
that the Wronskian of the two Jost solutions vanishes (cf.\ \cite[Definition 3.5, Lemma 3.6]{EKT})).
In case of such a resonance it is a delicate question to determine the behavior
of the scattering matrix near such a resonance under the minimal assumption $\sigma=1$.
From the analogous result for the continuous Schr\"odinger equation it is expected
that the scattering matrix is continuous at such a resonance and this was first established
in \cite{emt} (see also \cite{Kh5} for a special case). In fact, in \cite{EKT} this result was refined by showing that the scattering
coefficients are in the Wiener algebra of functions on the unit circle with integrable
Fourier coefficients. In the present paper we generalize this result by showing that if
\eqref{mainas} holds for $\sigma\ge 1$ then also all derivatives of order up to $\sigma-1$
of the scattering coefficients are again in the Wiener algebra. 

This question is not only of interest in scattering theory but also plays an important role in the solution of the Toda equation
via the inverse scattering transform (see e.g., \cite{KT1,KT2} where continuity of higher derivatives is needed)
and in proving dispersive estimates for the corresponding linear evolution equation
\beq
\frac{d}{dt} u(t) = H u(t).
\eeq
The latter case has attracted considerable interest recently (e.g.\ \cite{CT,EKT,PS} and the references therein) due to its importance for deriving asymptotic stability of
solitons for the associated nonlinear evolution equations (see e.g.\ \cite{KPS,K09,PS11,SK}).

As an application of our results we will establish two dispersive decay estimates. First of all we extend \cite[Theorem 5.3]{EKT} to the case of Jacobi operators.

\begin {theorem} \label{main1}
Let $H$ be defined by \eqref{jaceq} with $|a-\tfrac{1}{2}|+ |b| \in\ell^1_1(\Z)$. Then
\beq\label{Schr-as1}
\Vert \E^{-\I tH}P_c\Vert_{\ell^1\to \ell^\infty}=\mathcal{O}(t^{-1/3}),\quad t\to\infty,
\eeq
\beq\label{Schr-as2}
\Vert \E^{-\I tH} P_c\Vert_{\ell^2_\sigma\to \ell^2_{-\sigma}}=\mathcal{O}(t^{-1/2}),\quad t\to\infty,\quad\sigma>1/2.
\eeq
\end{theorem}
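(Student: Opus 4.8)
The plan is to follow the scheme of \cite[Theorem 5.3]{EKT}, substituting the Jacobi analogues of the scattering-theoretic ingredients; the one genuinely new analytic input, namely that the scattering coefficients lie in the Wiener algebra $\cW$, is exactly the instance $\sigma=1$ of the theorem proved in the previous sections. Parametrize the absolutely continuous spectrum $[-1,1]$ by $\lambda=\cos\theta=\tfrac12(z+z^{-1})$ with $z=\E^{\I\theta}$, and let $\psi_\pm(\theta,\cdot)$ be the Jost solutions normalized by $\psi_\pm(\theta,n)\to z^{\pm n}$ as $n\to\pm\infty$, with transmission coefficient $T(\theta)$ and reflection coefficients $R_\pm(\theta)$. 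Stone's formula together with the standard scattering relations expresses the kernel of $\E^{-\I tH}P_c$, for $m\le n$, as an oscillatory integral of the form
\beq\label{plan-rep}
[\E^{-\I tH}P_c](m,n)=\frac{1}{2\pi}\int_{-\pi}^{\pi}\E^{-\I t\cos\theta}\,T(\theta)\,\psi_-(\theta,m)\,\psi_+(\theta,n)\,d\theta ,
\eeq
plus analogous terms with $T\psi_-\psi_+$ replaced by $R_\pm\psi_\pm\psi_\pm$ (the case $m\ge n$ being symmetric). Inserting the Marchenko representation $\psi_\pm(\theta,n)=z^{\pm n}\bigl(1+\sum_{l\ge1}K_\pm(n,l)z^{\pm l}\bigr)$, where the hypothesis $|a-\tfrac12|+|b|\in\ell^1_1(\Z)$ yields $\sup_n\sum_l(1+|l|)|K_\pm(n,l)|<\infty$, and using $T,R_\pm\in\cW$, one checks that the amplitude $A(\theta,m,n):=T(\theta)\psi_-(\theta,m)\psi_+(\theta,n)$ has Fourier coefficients $\hat A_k(m,n)$ supported in $k\ge n-m$ with $|\hat A_k(m,n)|\le\rho\bigl(k-(n-m)\bigr)$ for a fixed $\rho\ge0$ satisfying $\sum_{j\ge0}(1+j)\rho(j)<\infty$; in particular $\sum_k|\hat A_k(m,n)|\le C$ uniformly in $m,n$.

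For \eqref{Schr-as1} I would use the elementary identity $\tfrac{1}{2\pi}\int_{-\pi}^{\pi}\E^{-\I t\cos\theta}\E^{\I k\theta}\,d\theta=(-\I)^kJ_k(t)$ to rewrite \eqref{plan-rep} as $[\E^{-\I tH}P_c](m,n)=\sum_k(-\I)^k\hat A_k(m,n)J_k(t)$, and then combine the uniform bound on $\sum_k|\hat A_k(m,n)|$ with the classical Bessel estimate $\sup_{k\in\Z}|J_k(t)|=\mathcal{O}(t^{-1/3})$ (originating from the Airy-type turning region $|k|\approx t$) to obtain $\sup_{m,n}|[\E^{-\I tH}P_c](m,n)|=\mathcal{O}(t^{-1/3})$, i.e.\ \eqref{Schr-as1}.

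For \eqref{Schr-as2} I would in addition invoke the sharp pointwise bounds $|J_k(t)|\le C\,(t^2-k^2)^{-1/4}$ for $|k|<t$ and the (essentially exponential) smallness of $J_k(t)$ for $|k|>t$, which together with the decay of $\rho$ give $|[\E^{-\I tH}P_c](m,n)|\le C\,t^{-1/4}\bigl(1+\bigl|\,t-|n-m|\,\bigr|\bigr)^{-1/4}$ up to rapidly decaying tails. One then estimates the weighted Hilbert--Schmidt norm
\beq\label{plan-hs}
\sum_{m,n}(1+|m|)^{-2\sigma}(1+|n|)^{-2\sigma}\,\bigl|[\E^{-\I tH}P_c](m,n)\bigr|^2
\eeq
by splitting the sum according to $|n-m|\le t/2$, $|n-m|\in[t/2,3t/2]$, $|n-m|>3t/2$: on the first region $|J_k(t)|\le Ct^{-1/2}$, on the last region the Bessel factors are negligible, and on the critical middle region $\max(|m|,|n|)\gtrsim t$, so carrying out the sum over $n$ for fixed $p=n-m$ produces an extra factor $\asymp(1+|p|)^{-2\sigma}\lesssim t^{-2\sigma}$; since $\sigma>1/2$ the weights $(1+|\cdot|)^{-2\sigma}$ are summable and each contribution to \eqref{plan-hs} is $\mathcal{O}(t^{-1})$. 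Thus $\|W_{-\sigma}(\E^{-\I tH}P_c)W_{-\sigma}\|_{\ell^2\to\ell^2}$ is dominated by the Hilbert--Schmidt norm and is $\mathcal{O}(t^{-1/2})$, which is \eqref{Schr-as2}.

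The main obstacle is the resonant case. When the Wronskian of the two Jost solutions vanishes at a band edge $z=\pm1$, under the minimal hypothesis $\sigma=1$ the scattering coefficients are only known to be continuous there (equivalently, to belong to $\cW$) and not differentiable, so one cannot integrate by parts in $\theta$ against $T$ or $R_\pm$; \emph{all} the time decay must instead be extracted from the oscillatory factor $\E^{-\I t\cos\theta}$ via Bessel asymptotics together with the $\ell^1$ and $\ell^1_1$ control of the Fourier coefficients of $A$, and it is precisely for this reason that the Wiener-algebra refinement of the scattering matrix (rather than mere continuity) proved in the preceding sections is indispensable. The only additional difference from \cite{EKT} is bookkeeping: the variable coefficient $a(n)$ enters the normalization of the spectral measure and the Marchenko kernels $K_\pm$, but once \eqref{plan-rep} and the bounds on $\hat A_k(m,n)$ are established the oscillatory-integral analysis proceeds exactly as in the constant-coefficient discrete Schr\"odinger case.
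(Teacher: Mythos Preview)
Your overall strategy matches the paper's: you correctly identify that the only new analytic input beyond \cite{EKT} is that $T,R_\pm$ lie in the Wiener algebra $\A$ (the case $j=0$ of Theorem~\ref{MT}), and that once the kernel is written as the oscillatory integral \eqref{mainkerdisc3} with amplitude uniformly in $\A$, the decay follows by an oscillatory-integral argument. The paper simply records this and refers to \cite[Theorem~5.3]{EKT}, packaging the oscillatory step as the van der Corput lemma for Wiener-algebra amplitudes (Lemma~\ref{vdcorput}); you instead expand the amplitude in Fourier modes and use the identity $\tfrac{1}{2\pi}\int_{-\pi}^{\pi}\E^{-\I t\cos\theta}\E^{\I k\theta}\,d\theta=(-\I)^kJ_k(t)$ together with pointwise Bessel bounds. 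For \eqref{Schr-as1} these two routes are essentially the same: $\sup_k|J_k(t)|=\mathcal O(t^{-1/3})$ together with the uniform $\ell^1$ bound $\sum_k|\hat A_k(m,n)|\le C$ gives the result, and this uniform $\ell^1$ bound is precisely the statement that the amplitude is in $\A$ uniformly (after using the scattering relations~\eqref{scatdisc} to rewrite the amplitude when $m,n$ have the wrong signs, which is what your ``analogous terms with $R_\pm\psi_\pm\psi_\pm$'' should mean --- the kernel itself has no extra terms).

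There is, however, a genuine gap in your argument for \eqref{Schr-as2}. You assert that the hypothesis $|a-\tfrac12|+|b|\in\ell^1_1$ yields $\sup_n\sum_\ell(1+|\ell|)|K_\pm(n,\ell)|<\infty$, and you then use this first-moment control on the Fourier coefficients of the amplitude to obtain the localized kernel bound $|[\E^{-\I tH}P_c](m,n)|\le C\,t^{-1/4}(1+|t-|n-m||)^{-1/4}$. The moment claim is false: from the estimate~\eqref{trans} one gets $|\Kt_\pm(n,\ell)|\le C\,\eta_\pm(n\pm\lfloor\ell/2\rfloor)$ with $\eta_\pm(m)=\sum_{\pm k\ge \pm m}(|a(k)-\tfrac12|+|b(k)|)$, and summing $(1+\ell)$ against this produces $\sum_k(k-n)^2(|a(k)-\tfrac12|+|b(k)|)$, which requires a \emph{second} moment on the coefficients. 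Under only $\ell^1_1$ one has $\Kt_\pm(n,\cdot)\in\ell^1$ uniformly (that is what puts the amplitude in $\A$), but not $\ell^1_1$; likewise, since $T$ has a genuinely two-sided Fourier series, your support claim $\hat A_k(m,n)=0$ for $k<n-m$ is also incorrect. Without the first-moment control your localized pointwise bound does not follow, because $\sum_j|\hat B_j||J_{n-m+j}(t)|\le\|\hat B\|_{\ell^1}\sup_p|J_p(t)|$ only gives $t^{-1/3}$. The fix is to proceed as in \cite{EKT}: split the $\theta$-interval into regions where $|\cos\theta|\ge c$ and apply Lemma~\ref{vdcorput} with $j=2$ to get $t^{-1/2}$ there, and near $\theta=\pm\pi/2$ absorb the factor $z^{\pm(n-m)}$ into the phase so that the full phase $-t\cos\theta+(n-m)\theta$ is non-stationary for $|n-m|<t/2$; the contribution from $|n-m|\gtrsim t$ is then handled by the weights $(1+|m|)^{-\sigma}(1+|n|)^{-\sigma}$ with $\sigma>1/2$. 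This uses only the $\A$-bound on the amplitude, not an extra moment.
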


The following extends \cite[Theorem 6.1]{EKT} to the case of Jacobi operators and is new in the case of resonances.

\begin{theorem} \label{mainthmdisc}
Let $H$ be defined by \eqref{jaceq} with $|a-\tfrac{1}{2}|+ |b| \in\ell^1_j(\Z)$, where $j=3$ if there is a resonance at $\hat{z}=+1$ or $\hat z=- 1$ and $j=2$ if neither
of these points is a resonance. Furthermore, for every resonance $\hat{z}\in\{\pm1\}$ let $\vp_{\hat z}(n)$ be a bounded solution of $H \vp_{\hat z}(n)=\frac{\hat z + \hat z^{-1}}{2} \vp_{\hat z}(n)$ normalized according to $\lim_{n \to +\infty} (|\vp_{\hat z}(n)|^2 + |\vp_{\hat z}(-n)|^2)=2$. 
Denote by $P_{\hat z}$ the projection onto the span of $\vp_{\hat z}$ given by the kernel $[P_{\hat z}](n,k)=\vp_{\hat z}(n)\vp_{\hat z}(k)$ and set $P_{\hat z}=0$ if there is no resonance at $\hat z$. By $P_{ac}$ we denote the projector on the absolutely continuous subspace of $H$. Then the following decay holds:
\begin{align} \label{resdecdisc}
\Vert \E^{-\I t H}P_{ac}-\frac{\E^{-\I t}}{\sqrt{-2\pi \I t}}P_{1}-\frac{\E^{\I t}}{\sqrt{2\pi \I t}}P_{-1} \Vert_{\ell^1_2 \to \ell^\infty_{-2}}=\mathcal{O}(t^{-4/3}),\quad t\to\infty.
\end{align}
\end{theorem}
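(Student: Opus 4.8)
The plan is to represent $\E^{-\I tH}P_{ac}$ as an oscillatory integral over the unit circle and then run a stationary phase analysis, using the new regularity result of this paper (derivatives of the scattering coefficients lie in the Wiener algebra) to control the amplitude. Parametrize the spectrum $[-1,1]$ by $\lambda=\frac{z+z^{-1}}{2}$ with $z=\E^{\I\theta}$, $|z|=1$, and recall the resolvent kernel $R(\lambda,n,m)=\frac{\psi_+(z,\max\{n,m\})\,\psi_-(z,\min\{n,m\})}{W(z)}$ together with $T(z)\propto W(z)^{-1}$. Feeding this into Stone's formula one obtains, for $n\ge m$, a representation of the form
\[
[\E^{-\I tH}P_{ac}](n,m)=\frac{1}{2\pi\I}\oint_{|z|=1}\E^{-\I t\frac{z+z^{-1}}{2}}\,T(z)\,\psi_+(z,n)\,\psi_-(z,m)\,\frac{dz}{z}
\]
(and symmetrically for $n<m$), where I would use the transformation-operator representation $\psi_\pm(z,n)=z^{\pm n}\bigl(1+\sum_{k\ge1}K_\pm(n,\pm(n+k))z^{\pm k}\bigr)$ together with the classical moment bounds tying the decay of $K_\pm(n,\cdot)$ to \eqref{mainas}. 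The theorem of this paper then supplies that $T$ — and, after factoring out $z^{\pm n}$, the reduced Jost functions — are differentiable up to order $j-1$ into the Wiener algebra $\A$.

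First I would split the contour integral into a piece supported away from the band edges $z=\pm1$ and pieces localized near $z=+1$ and near $z=-1$. On the band interior $\frac{d}{d\theta}\cos\theta=-\sin\theta$ is bounded away from zero, so repeated integration by parts gains a factor $t^{-1}$ each time at the cost of one $z$-derivative of the amplitude; each such derivative acting on $z^{\pm n}$ produces a factor of order $|n|$ (resp.\ $|m|$), exactly absorbed by the weights $\ell^1_2\to\ell^\infty_{-2}$, while the Wiener-algebra structure keeps the resulting Fourier series summable so that $\oint\cdots\frac{dz}{z}$ remains bounded. Two integrations by parts — hence $j-1\ge2$, i.e.\ $j=3$ — already give decay faster than $t^{-4/3}$ for this part; in the non-resonant case the amplitude carries an extra vanishing factor at the edges, so one derivative ($j=2$) suffices overall.

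Near $z=\pm1$ I would substitute $z=\pm\E^{\I\theta}$, so the phase becomes $\pm t\cos\theta=\pm t(1-\tfrac{\theta^2}{2}+O(\theta^4))$ with a single non-degenerate critical point at $\theta=0$, and factor out $\E^{\mp\I t}$. A stationary phase expansion then produces a leading term of order $t^{-1/2}$ whose amplitude is the value of $T(z)\psi_+(z,n)\psi_-(z,m)$ at the edge. In the resonant case the two edge Jost solutions are proportional to the bounded solution $\vp_{\hat z}$, and a direct computation — using the Jost normalization $\psi_+(z,n)\sim z^n$ and the scattering relations at the edge — identifies this leading term with exactly $\frac{\E^{\mp\I t}}{\sqrt{\mp2\pi\I t}}[P_{\pm1}](n,m)$, the normalization imposed on $\vp_{\hat z}$ being precisely what makes the constants match; in the non-resonant case $T(\pm1)=0$, there is no $t^{-1/2}$ contribution, and $P_{\pm1}=0$. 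Subtracting these leading terms, the remainder of the edge integrals is estimated by a quantitative stationary phase bound that uses the available $C^{j-1}$ regularity of the amplitude together with the weights, and balancing the near-edge and transition regimes yields the claimed $\mathcal{O}(t^{-4/3})$.

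The main obstacle is exactly this last step: making the stationary phase remainder estimate quantitative in the weighted norm $\ell^1_2\to\ell^\infty_{-2}$, and pinning down the leading coefficient to be $P_{\pm1}$ with the correct constant. In spirit this follows \cite[Theorem 6.1]{EKT}, but the passage from the discrete Schr\"odinger case to general Jacobi operators forces one to work with the transformation-operator representation of the (no longer explicit) Jost solutions and to track the coefficients $a(n)$ in the Wronskian and in Stone's formula; checking that every Fourier series arising in the process stays summable, with norms controlled by $\bigl\||a-\tfrac12|+|b|\bigr\|_{\ell^1_j}$, is where the new Wiener-algebra result does the decisive work.
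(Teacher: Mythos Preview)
Your overall strategy --- oscillatory integral representation, identification of the $t^{-1/2}$ edge contribution with $P_{\pm 1}$, and a remainder estimate --- is correct in spirit, and your identification of the leading coefficient via $T(\hat z)\vp_+(\hat z,n)\vp_-(\hat z,m)$ together with the normalization of $\vp_{\hat z}$ is right (this is the content of Lemma~\ref{lem5rcdisc}). But the remainder estimate is where your proposal becomes too vague to count as a proof, and the paper's argument is organized quite differently from what you sketch.

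The paper does \emph{not} split into interior and edge pieces with cutoffs, does not integrate by parts twice on the interior, and does not invoke a generic stationary-phase remainder at the edges. The key device is instead to recognize (via Bessel and Struve function asymptotics, Lemma~\ref{lem5rcdisc}) that $\frac{\E^{-\I t}}{\sqrt{-2\pi\I t}}$ is itself, up to an $O(t^{-3/2})$ error and an explicit correction $\frac{1}{\I t\pi}$, equal to $\frac{1}{2\pi}\int_{-\pi/2}^{\pi/2}\E^{-\I t\cos\theta}\,d\theta$. This lets one write the kernel of the whole operator in \eqref{resdecdisc} directly as two half-interval oscillatory integrals with amplitudes $G(\theta,n,k)-G(0,n,k)$ and $G(\theta,n,k)-G(\pi,n,k)$, plus those explicit corrections. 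A \emph{single} integration by parts then produces $\frac{1}{t}\int\E^{-\I t\cos\theta}\frac{d}{d\theta}\bigl(\frac{G(\theta)-G(\hat\theta)}{\sin\theta}\bigr)\,d\theta$, and the boundary terms at $\pm\pi/2$ cancel exactly against the corrections. The remaining $t^{-1/3}$ comes from the Wiener-algebra van der Corput lemma (Lemma~\ref{vdcorput}) with $j=3$, after verifying --- using the scattering relations to handle the three sign configurations of $n,k$ --- that the $\A$-norm of $\frac{d}{d\theta}\frac{G(\theta)-G(\hat\theta)}{\sin\theta}$ is bounded by $C(1+|n|+|k|)^2$.

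Your phrase ``balancing the near-edge and transition regimes yields the claimed $\mathcal{O}(t^{-4/3})$'' is precisely the missing argument. A generic stationary-phase remainder bound naturally gives $O(t^{-3/2})$ (requiring more regularity than is assumed) or $O(t^{-1})$, not the intermediate $O(t^{-4/3})$; and a cutoff-based interior/edge splitting introduces artificial boundary terms from integration by parts that do not cancel. The exponent $\tfrac{4}{3}=1+\tfrac{1}{3}$ is a fingerprint of the one-IBP-plus-van-der-Corput mechanism, and this is also what fixes both the weight $2$ and the moment condition $j=3$: what one actually needs is that $\frac{d}{d\theta}\frac{G(\theta)-G(0)}{\sin\theta}\in\A$ with norm $\lesssim(|n|+|k|)^2$, and that is exactly what Lemmas~\ref{lem1rcdisc}, \ref{lem2rcdisc} and Theorem~\ref{thm6rcdisc} (not just Theorem~\ref{MT}) are designed to deliver.
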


Such a decay estimate with integrable time decay has previously only been established in the non-resonant case where $P_{-1}=P_1=0$ (cf.\ \cite[Theorem 6.1]{EKT}).
For continuous one-dimensional Schr\"odinger equations an analogous result in the resonant case has been first proven by Goldberg \cite{gold}. For further
results for continuous Schr\"odinger equations see \cite{EHT}, \cite{EKMT}.

%%%%%%%%%%%%%%%%%%%%%%%%%%%%%%%%%%%%%%%%%%%%%%%%%%%%%%%%%%%%%%%%%%%%%%%%%%%%%
\section{Properties of the Scattering Matrix}
%%%%%%%%%%%%%%%%%%%%%%%%%%%%%%%%%%%%%%%%%%%%%%%%%%%%%%%%%%%%%%%%%%%%%%%%%%%%%

In this section we look at scattering theory for the Jacobi operator $H$. As a general reference we refer to \cite[Chapter~10]{tjac}. If \eqref{mainas} is contained in $\ell^1_{1}(\Z)$
then there exist Jost solutions $\vp_\pm(z,n)$ of 
\beq \label{josteq}
H \vp_\pm(z,n)= \frac{z+z^{-1}}{2} \vp_\pm(z,n), \qquad 0< |z| \le 1,
\eeq
which satisfy $\lim_{n \to \pm \infty} \widetilde{\vp}_\pm(z,n)=1$, where $\widetilde{\vp}_\pm(z,n)=\vp_\pm(z,n) z^{\mp n}$. These Jost solutions can equivalently be expressed as 
\beq \label{jostker}
\vp_\pm(z,n)=\sum^{\pm\infty}_{\ell=n} K_\pm(n,\ell)z^{\pm \ell}, \quad n \in \Z, \quad |z| \le 1 ,
\eeq
where the transformation operators satisfy
\beq \label{trans}
|K_\pm(n,\ell)|\leq
C_\pm(n) \left( \delta(n,\ell) + (1-\delta(n,\ell)) \sum^{\pm\infty}_{k=\floor{\frac{n+\ell}{2}}}\Big(\Big|a(k)-\frac 1 2 \Big|+
|b(k)|\Big) \right)
\eeq
for $\pm \ell\geq \pm n$ and $\floor{.}$ denotes the usual floor function.

If furthermore $\pm n \geq \mp 1$ holds, we can replace $C_\pm(n)$ by some universal constant $C_\pm(n) \le C$. For \[\widetilde{\vp}_\pm(z,n)=\sum^{\pm\infty}_{\ell=0 }K_\pm(n, \ell +n)z^{\pm \ell}=\sum^{\pm\infty}_{\ell=0 }\Kt_\pm(n, \ell)z^{\pm \ell}\] we get similar estimates. Introduce the Wiener algebra
\[
\A= \Big\{ f(z) = \sum_{m\in\Z} \hat{f}(m) z^m\  \Big|\, \|\hat{f}\|_{\ell^1} < \infty, \ \ |z|=1 \Big\}, 
\]
with the norm $\Vert f \Vert_\A=\Vert \hat f\Vert_{\ell^1}$.
Formulas \eqref{jostker} and \eqref{trans} imply
\beq \label{vpinA}
\vp_\pm(z,n), \widetilde{\vp}_\pm(z,n) \in \A,
\eeq
with $\Vert \vpt(z,n) \Vert_{ \A }$ independent of $n$ for $\pm n \geq 0$. Let us also recall the discrete version of the Wiener lemma \cite{Wiener} which states that if $f(z) \in \A $ and $f(z)\not =0$ for all $|z|=1$ then $f^{-1}(z)\in\A$. The estimate \eqref{trans} immediately leads to the following property: 

\begin{lemma} \label{lem1rcdisc}
If \eqref{mainas} holds for $\sigma=j+1$ with $j\geq 0$, then $\frac{d^l}{d z^l}(\vpt_\pm(z,n)) $ is an element of $\A$ for $0 \le l \le j$. Moreover, for $\pm n \geq 0$ the $\A$-norms of these expressions do not depend on $n$. 
\end{lemma}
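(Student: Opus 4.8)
The plan is to work directly with the power-series representation $\widetilde{\vp}_\pm(z,n) = \sum_{\ell=0}^{\pm\infty} \Kt_\pm(n,\ell) z^{\pm\ell}$ and differentiate termwise. Formally, $\frac{d^l}{dz^l}\widetilde{\vp}_\pm(z,n) = \sum_{\ell} \Kt_\pm(n,\ell)\, (\pm\ell)(\pm\ell-1)\cdots(\pm\ell-l+1)\, z^{\pm\ell - l}$, so the $l$-th derivative again has the form $z^{\mp l}\sum_\ell c_\ell\, z^{\pm\ell}$ with $|c_\ell| \le C_l\, \ell^l\, |\Kt_\pm(n,\ell)|$. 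Since multiplication by the fixed monomial $z^{\mp l}$ is an isometry on $\A$, membership of the derivative in $\A$ reduces to showing $\sum_\ell \ell^l |\Kt_\pm(n,\ell)| < \infty$ for $0 \le l \le j$, with a bound uniform in $n$ for $\pm n \ge 0$. So the whole statement comes down to a weighted summability estimate for the transformation-operator kernel, extracted from \eqref{trans}.

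First I would fix $\pm n \ge 0$ so that $C_\pm(n)$ can be replaced by the universal constant $C$, and write $q(k) = |a(k)-\tfrac12| + |b(k)|$, which by hypothesis lies in $\ell^1_{j+1}(\Z)$. From \eqref{trans}, for $\ell \ge 0$ (taking the $+$ case; the $-$ case is symmetric) one has $|\Kt_+(n,\ell)| = |K_+(n,\ell+n)| \le C\big(\delta(\ell,0) + \sum_{k \ge n + \lfloor \ell/2\rfloor} q(k)\big)$. Hence
\[
\sum_{\ell \ge 0} \ell^l |\Kt_+(n,\ell)| \le C\sum_{\ell \ge 1} \ell^l \sum_{k \ge n + \lfloor \ell/2 \rfloor} q(k) = C\sum_{k} q(k) \sum_{\ell\,:\, n + \lfloor \ell/2\rfloor \le k} \ell^l.
\]
For the inner sum, the condition forces $\ell \le 2(k-n) + 1$, so it is bounded by a constant times $(k-n+1)^{l+1} \le C_l (1+|k|)^{l+1}$ uniformly in $n \ge 0$ (here using $k \ge n \ge 0$). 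Therefore the double sum is bounded by $C_l \sum_k (1+|k|)^{l+1} q(k) = C_l \|q\|_{\ell^1_{l+1}} \le C_l \|q\|_{\ell^1_{j+1}} < \infty$, and the bound is independent of $n$. Interchanging the order of summation is justified by nonnegativity (Tonelli).

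This also legitimizes the termwise differentiation: once $\sum_\ell \ell^l |\Kt_\pm(n,\ell)| < \infty$ for all $l \le j$, the series for $\widetilde{\vp}_\pm$ and each of its first $j$ formal derivatives converge absolutely and uniformly on a neighborhood of $|z|=1$, so the sum is $j$ times differentiable and the derivatives are given by the termwise series; each then lies in $\A$ with norm $\le C_l \|q\|_{\ell^1_{j+1}}$, uniformly in $n$ for $\pm n \ge 0$. I expect the only real point requiring care is the counting of the inner sum $\sum_{\ell\,:\,\lfloor\ell/2\rfloor \le k-n} \ell^l$ and checking that the resulting polynomial weight $(1+|k|)^{l+1}$ — rather than $(1+|k-n|)^{l+1}$ — is what one needs for $n$-independence; this works precisely because the summation index $k$ ranges over $k \ge n \ge 0$, so $1 + |k| \ge 1 + (k-n)$. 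The loss of one power (moments of order $j+1$ give $j$ derivatives) is exactly the exponent bookkeeping above.
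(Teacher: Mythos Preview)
Your argument is correct and is exactly the computation the paper leaves implicit when it says the lemma follows ``immediately'' from the kernel estimate \eqref{trans}; the Fubini step and the count $\sum_{\ell \le 2(k-n)+1} \ell^l \le C_l(1+|k|)^{l+1}$ for $k\ge n\ge 0$ are precisely what converts the $(j{+}1)$'th moment of $q$ into the $\A$-bound for the $j$'th derivative, uniformly in $n$. The only small inaccuracy is the phrase ``on a neighborhood of $|z|=1$'': the series are one-sided power series, so the uniform convergence you actually get (and all you need) is on the closed unit disk $|z|\le 1$, not on an open neighborhood of the circle; this does not affect the conclusion.
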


The fact that $\vp_\pm(z^{-1},n)$ also solves the equation \eqref{josteq} gives rise to the scattering relations
\beq \label{scatdisc}
T(z) \vp_\pm(z,n)= R_\mp(z) \vp_\mp(z,n) + \vp_\mp(z^{-1},n), \qquad |z|=1,
\eeq
where the transmission coefficient $T$ and the reflection coefficients $R_\pm$ can be expressed in terms of Wronskians. 
To this end let us denote the discrete Wronskian by
\beq\label{alg2}
W(f(z,n), g(z,n))=a(n-1)\big(f(z,n-1)g(z,n)-g(z,n-1)f(z,n)\big). 
\eeq
Introducing the functions
\[
W(z)=W(\vp_+(z,1), \vp_-(z,1)), \: W_\pm(z)=W(\vp_\mp(z,1),\vp_\pm(z^{-1},1)),
\]
it follows that
\beq\label{defRRdisc}
T(z)= \frac{z-z^{-1}}{2 \I  W(z)},\quad R_\pm(z)= \mp\frac{W_\pm(z)}{W(z)}.
\eeq
In \cite[Theorem 4.1]{EKT} it was proved that the transmission and reflection coefficients
are elements of the Wiener algebra. Here we extend this result to derivatives.

\begin{theorem}\label{MT}
 If $|a(n)-\frac{1}{2}| + |b(n)|\in\ell^1_{j+1}(\Z)$, then $\frac{d^l}{dz^l}(T(z))\in \A$ and $\frac{d^l}{dz^l}R_\pm(z)\in\A$ for $0\le l \le j$. 
\end {theorem}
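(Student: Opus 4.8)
The plan is to express each scattering coefficient as a rational combination of Wronskians of Jost solutions and then argue that all the ingredients (the Jost solutions themselves, their $z$-derivatives, the prefactors $z\pm z^{-1}$, and the reciprocal $W(z)^{-1}$) lie in the Wiener algebra $\A$. First I would recall from \eqref{defRRdisc} that
\[
T(z) = \frac{z - z^{-1}}{2\I\, W(z)}, \qquad R_\pm(z) = \mp\frac{W_\pm(z)}{W(z)},
\]
where $W(z)$, $W_\pm(z)$ are the Wronskians \eqref{alg2} of the Jost solutions $\vp_\pm(z,n)$ evaluated at $n=1$. By Lemma~\ref{lem1rcdisc}, under the hypothesis $|a-\tfrac12|+|b|\in\ell^1_{j+1}(\Z)$ the functions $\vpt_\pm(z,1)$ and $\vpt_\pm(z,0)$ have $z$-derivatives up to order $j$ in $\A$ (the index shift in \eqref{alg2} only involves $n=1$ and $n=0$, which are covered by the $\pm n\ge 0$ clause after possibly swapping the roles of $\pm$). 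Since $\vp_\pm(z,n)=z^{\pm n}\vpt_\pm(z,n)$ and $z^{\pm n}$ together with all its derivatives is in $\A$ on $|z|=1$, and since $\A$ is a Banach algebra closed under (Leibniz-rule) differentiation of products, it follows that $\frac{d^l}{dz^l}\vp_\pm(z,n)\in\A$ for $n\in\{0,1\}$ and $0\le l\le j$. The Wronskian \eqref{alg2} is just an $a(0)$-weighted bilinear combination of these, so $\frac{d^l}{dz^l}W(z)$, $\frac{d^l}{dz^l}W_\pm(z)\in\A$ for $0\le l\le j$, and likewise $z-z^{-1}$ and its derivatives are trivially in $\A$.

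The remaining and only substantive point is to control the reciprocal $1/W(z)$ and its derivatives. On $|z|=1$ we have $W(z)\in\A$, and the key fact I would invoke is that $W(z)$ is nonzero on the unit circle except possibly at the resonant points $\hat z=\pm1$; but at those points the zero of $W(z)$ is exactly cancelled by the zero of $z-z^{-1}$ in $T(z)$, and for $R_\pm(z)$ one uses that $W_\pm(\hat z)$ vanishes simultaneously at a resonance (this is the content of the cited result \cite[Theorem 4.1]{EKT}, which already places $T,R_\pm$ themselves in $\A$ — so I would take as given that the function $g(z):=T(z)\in\A$ and, where there is no resonance, $1/W(z)\in\A$ by the Wiener lemma \cite{Wiener}). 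To pass to derivatives, note that on any arc of $|z|=1$ bounded away from the resonances, $1/W\in\A$ by the Wiener lemma, and one differentiates $T\cdot W = \tfrac{1}{2\I}(z-z^{-1})$ and $R_\pm\cdot W = \mp W_\pm$ using the Leibniz rule: inductively, $\frac{d^l}{dz^l}T = \frac{1}{W}\bigl(\frac{1}{2\I}\frac{d^l}{dz^l}(z-z^{-1}) - \sum_{k=1}^{l}\binom{l}{k}\frac{d^k}{dz^k}W\cdot\frac{d^{l-k}}{dz^{l-k}}T\bigr)$, and similarly for $R_\pm$. Provided $1/W\in\A$, every term on the right-hand side is a product of $\A$-elements and hence in $\A$, so by induction on $l$ the derivatives of $T$ and $R_\pm$ are in $\A$ for $0\le l\le j$.

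The main obstacle is precisely the behavior at a resonance $\hat z\in\{\pm1\}$, where $W(\hat z)=0$ and the naive formula $1/W$ is not in $\A$ (indeed not even continuous). The clean way around this is to avoid dividing by $W$ altogether near the resonances: following \cite{EKT}, one factors out the vanishing, writing $W(z)=(z-\hat z)\,\w(z)$ locally with $\w(z)\in\A$ and $\w(\hat z)\ne 0$ (this requires $\sigma$ large enough — one derivative of $W$ is needed to identify the simple zero, explaining the extra weight), so that $T(z)=\tfrac{1}{2\I}\frac{z-z^{-1}}{z-\hat z}\cdot\frac{1}{\w(z)}$ with $\tfrac{z-z^{-1}}{z-\hat z}$ a genuine $\A$-function near $\hat z$ and $1/\w\in\A$ by the Wiener lemma; for $R_\pm$ one analogously cancels the zero of $W_\pm$ against that of $W$. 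One then has to do a bit of bookkeeping to check that the order of the zero of $W_\pm$ at $\hat z$ is at least that of $W$ and that enough derivatives survive the cancellation — this is where the hypothesis $\sigma=j+1$ rather than $\sigma=j$ is consumed — and finally patch the local descriptions near $\hat z=\pm1$ together with the global Wiener-lemma estimate on the complementary arcs using a smooth partition of unity subordinate to a cover of the circle, noting that multiplication by a fixed smooth (hence $\A$) cutoff preserves $\A$. Granting the structural input from \cite{EKT}, the induction above then closes.
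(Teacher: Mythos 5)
Your outline of the non-resonant case (numerators and Wronskians in $\A$ via Lemma~\ref{lem1rcdisc}, Wiener's lemma for $1/W$, Leibniz induction for the derivatives) is fine and matches the paper's remark that this case is straightforward. The gap is in the resonant case, and it is not mere bookkeeping: the step ``write $W(z)=(z-\hat z)\,\mathrm{w}(z)$ locally with $\mathrm{w}\in\A$'' is exactly the hard part of the theorem, and the only mechanism you offer for it loses a derivative. Concretely, if all you know is that $W$ has $j$ derivatives in $\A$ (equivalently $\sum_m |m|^j|\hat W(m)|<\infty$, which is what Lemma~\ref{lem1rcdisc} gives under the weight $j+1$) and $W(\hat z)=0$, then the division $W(z)/(z-\hat z)=\sum_m \hat W(m)\,\frac{z^m-\hat z^m}{z-\hat z}$ costs one power of $m$ per derivative, so you only control derivatives of the quotient up to order $j-1$ --- this is precisely the content of Lemma~\ref{lem2rcdisc} and Theorem~\ref{thm6rcdisc}, which are stated for $0\le l\le j-1$. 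Your induction would therefore prove $\frac{d^l}{dz^l}T\in\A$ only for $l\le j-1$ at a resonance, strictly weaker than the claimed $l\le j$.

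The paper avoids this loss by never dividing $W$ by $(z-\hat z)$ generically. Instead it produces an \emph{exact} algebraic factorization: the auxiliary Wronskian-type quantities $\breve W_\pm(z)=\vp_\pm(z,1)\vp_\pm(\hat z,0)-\vp_\pm(z,0)\vp_\pm(\hat z,1)$ factor as $\zeta(z)\widetilde\Psi_\pm(z)$ with $\zeta(z)=(z-\hat z)/z$, where $\widetilde\Psi_\pm$ is a power series whose coefficients $h_\pm(\ell)$ are shown, via the Marchenko equation and a Neumann-series argument (Lemma~\ref{lem3rcdisc}), to obey the \emph{same} tail-sum bound \eqref{trans} as the transformation kernels $K_\pm$. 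This tail structure yields $j$ (not $j-1$) derivatives of $\widetilde\Psi_\pm$ in $\A$ under the weight $j+1$, and $W(z)/(z-\hat z)$ and $W_\pm(z)/(z-\hat z)$ are then written explicitly as algebra combinations of $\vpt_\pm$ and $\widetilde\Psi_\pm$, after which one argues as you do (simplicity of the zero from $|T|\le1$, Wiener's lemma, a cutoff to handle resonances at both $\pm1$). To repair your proof you would need to supply this factorization --- or some other argument showing that the quotient retains all $j$ derivatives --- rather than appealing to division in $\A$; you would also need the corresponding explicit formulas for $W_\pm(z)/(z-\hat z)$, since for $R_\pm$ the cancellation cannot be reduced to the statement ``$W_\pm(\hat z)=0$'' alone.
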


\begin{proof}
We  only focus on the resonant case $W(1)W(-1)=0$, since the other case is straightforward. Let $\hat{z}\in\{\pm 1\}$ be a point with $W(\hat{z})=0$.
As a first step we introduce the expressions
\[
\breve W_\pm(z)=\vp_\pm( z,1)\vp_\pm(\hat z,0)- \vp_\pm( z,0)\vp_\pm(\hat z,1).
\]
Then the calculations in \cite[Lemma ~4.1]{emt} show
\beq\label{hatW}
\breve W_\pm(z)=\zeta(z)\widetilde{\Psi}_\pm(z), \qquad  \zeta(z)=\frac{z-\hat z}{z},
\eeq
where
\begin{align} \label{defh}
\widetilde{\Psi}_\pm(z)=\sum_{\ell=\frac{1\pm 1}{2}}^{\pm\infty}h_\pm(\ell)(\hat z z)^{\pm \ell}, \quad 
h_\pm(\ell)=\Phi_\pm^{(1)}(\ell)\vp_\pm(\hat z,0) -\Phi_\pm^{(0)}(\ell)\vp_\pm(\hat z,1)
\end{align}
and
\[
\Phi_\pm^{(k)}(m)=\sum^{\pm\infty}_{\ell=m} K_\pm(k,\ell)\hat z^{\ell}.
\]
Moreover, $h_\pm(\ell)$ satisfies a similar estimate as \eqref{trans}, as we will show in Lemma \ref{lem3rcdisc} below.
This immediately leads to the conclusion that $\widetilde{\Psi}_\pm(z)$ and all its derivatives up to order $j$ are elements of the Wiener algebra. 
Next we need to distinguish between the cases $\vp_+(\hat z,0)\vp_-(\hat z,0)\neq 0$ and $\vp_+(\hat z,1)\vp_-(\hat z,1)\neq 0$
(these are the only cases since the solutions $\vp_\pm(\hat z,n)$ cannot vanish at two consecutive points). Straightforward computations show
\[
\frac{W(z)}{z-\hat{z}}=  \frac{a(0)}{z} \begin{cases} \left( \frac{\vpt_+(z,0)}{\vpt_-(\hat z,0)}
\widetilde{\Psi}_-(z)-\frac{\vpt_-(z,0)}{\vpt_+(\hat z,0)} 
\widetilde{\Psi}_+(z) \right), & \vp_+(\hat z,0)\vp_-(\hat z,0)\neq 0, \\
z \hat z \left( \frac{\vpt_+(z,1)}{\vpt_-(\hat z,1)}
\widetilde{\Psi}_-(z)-\frac{\vpt_-(z,1)}{\vpt_+(\hat z,1)} 
\widetilde{\Psi}_+(z)\right), & \vp_+(\hat z,1)\vp_-(\hat z,1)\neq 0.\end{cases}
\]
Since $|T(z)| \le 1$, the zeroes of $W(z)$ can at most be of first order, which shows that $\frac{W(z)}{z-\hat{z}}$ can vanish at most at $-\hat{z}$.
Hence if $W(-\hat{z})\ne 0$ then $\frac{z-\hat{z}}{W(z)}\in\mathcal{A}$ by Wiener's lemma (including all derivatives up to order $j$) and hence
the same is true for $T(z)$. Otherwise, if $W(-\hat{z})= 0$ and $W(\hat z) \neq 0$ one has the analogous properties for $\frac{W(z)}{z+\hat{z}}$ and using
a smooth cut-off function (which is one near $\hat{z}$ and vanishes near $-\hat{z}$) one can combine both results into one for
$\frac{W(z)}{(z-\hat{z})(z+\hat{z})}$ and proceed is before.

To get similar results for $R_\pm$, we use the following formulas: 
\[
\frac{W_\pm(z)}{z-\hat z}= \frac{a(0)}{z} \begin{cases} \left( \frac{\vpt_\mp(z,0)}{\vpt_\pm(\hat z,0)}
\widetilde{\Psi}_\pm(\frac{1}{z})-\frac{\vpt_\pm(z^{-1},0)}{\vpt_\mp(\hat z,0)} 
\widetilde{\Psi}_\mp(z) \right), & \vp_+(\hat z,0)\vp_-(\hat z,0)\neq 0,\\
\left( z \hat z\frac{\vpt_\mp(z,1)}{\vpt_\pm(\hat z,1)}
\widetilde{\Psi}_\pm(\frac{1}{z})-\frac{ \hat z}{z} \frac{\vpt_\pm(z^{-1},1)}{\vpt_\mp(\hat z,1)} 
\widetilde{\Psi}_\mp(z) \right), & \vp_+(\hat z,1)\vp_-(\hat z,1)\neq 0,\end{cases}
\]
and proceed as in the previous case.
\end{proof}

\begin{lemma} \label{lem3rcdisc}
For $h_\pm(m)$ given by \eqref{defh} we have 
\beq \label{defetadisc}
|h_\pm(m)| \leq \hat{C} \sum^{ \pm \infty}_{n=\floor{\frac{m\pm1}{2}}}\Big(\Big|a(n)-\frac 1 2 \Big|+
|b(n)|\Big)=\hat{C} \widetilde{\eta}_\pm(m)
\eeq
for some constant $\hat{C}>0$ and $\pm m \geq 0$.  
\end{lemma}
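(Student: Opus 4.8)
The plan is to establish the bound on $h_\pm(m)$ directly from its definition \eqref{defh} by estimating the two building blocks $\Phi_\pm^{(k)}(m)$ (for $k=0,1$) and the scalars $\vp_\pm(\hat z,0)$, $\vp_\pm(\hat z,1)$. First I would observe that $\vp_\pm(\hat z,0)$ and $\vp_\pm(\hat z,1)$ are just fixed numbers (values of the Jost solutions at $|z|=1$), hence bounded by a constant depending only on $C_\pm$ and the $\ell^1_1$-norm of the coefficients, via \eqref{jostker}--\eqref{trans}; in particular, for the points $n=0,1$ the universal-constant version $C_\pm(n)\le C$ applies. So it suffices to bound $|\Phi_\pm^{(k)}(m)|$ for $k=0,1$ by a constant times $\widetilde\eta_\pm(m)$.

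Next I would plug the transformation-operator estimate \eqref{trans} into the definition $\Phi_\pm^{(k)}(m)=\sum_{\ell=m}^{\pm\infty}K_\pm(k,\ell)\hat z^\ell$, using $|\hat z|=1$, to get
\[
|\Phi_\pm^{(k)}(m)| \le C\sum_{\ell=m}^{\pm\infty}\left(\delta(k,\ell) + \sum_{n=\floor{\frac{k+\ell}{2}}}^{\pm\infty}\Big(\Big|a(n)-\tfrac12\Big|+|b(n)|\Big)\right).
\]
The $\delta(k,\ell)$ term contributes at most one unit and only when $\ell=k\in\{0,1\}$, which for $\pm m\ge 0$ (so the sum includes $\ell=k$ only when $m$ is near the origin) can be absorbed into $\widetilde\eta_\pm(m)$ after possibly enlarging the constant, since $\widetilde\eta_\pm$ is bounded below near the edge of its range by a fixed multiple of the full $\ell^1_1$-tail only if that tail is nonzero — more cleanly, one simply notes $1 \le$ (constant)$\cdot\widetilde\eta_\pm(m)$ fails if the tail vanishes, so instead I would keep the $\delta$ term as a separate harmless additive contribution supported at finitely many $m$ and fold it in by adjusting $\hat C$; alternatively, reindex so the Kronecker term never appears. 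The genuine work is the double sum $\sum_{\ell=m}^{\pm\infty}\sum_{n=\floor{(k+\ell)/2}}^{\pm\infty}(|a(n)-\tfrac12|+|b(n)|)$: interchanging the order of summation, each $n$ appears for those $\ell$ with $\floor{(k+\ell)/2}\le n$ (in the $+$ case), i.e. roughly $\ell \le 2n-k+1$, together with $\ell\ge m$; hence the inner count of $\ell$'s is $O(n)$. This produces $\sum_{n\ge \floor{(m+k)/2}}(1+|n|)(|a(n)-\tfrac12|+|b(n)|)$, which is $O(\widetilde\eta_\pm(m))$ provided we only assume the \emph{first} moment is summable. Here $\widetilde\eta_\pm(m) = \sum_{n\ge\floor{(m\pm1)/2}}(|a(n)-\tfrac12|+|b(n)|)$ as defined in \eqref{defetadisc}, and the weight $(1+|n|)$ is exactly compensated because $n\gtrsim m/2$ in the range of summation, so $(1+|n|)\cdot(\text{term at }n)$ summed over $n\ge m/2$ is dominated by a constant times the unweighted tail only if we already have one extra moment — which is precisely the hypothesis $\sigma=1$ that is in force throughout this section. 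I would double-check the index shifts between $\floor{(m+k)/2}$ and $\floor{(m\pm1)/2}$ to confirm they differ by at most one, so the tails are comparable up to a constant.

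The main obstacle I anticipate is purely bookkeeping: getting the floor-function indices to line up so that the tail appearing after the interchange of summation is dominated by $\widetilde\eta_\pm(m)$ with the constant $\hat C$ uniform in $m$, and making sure the $\delta(k,\ell)$ contribution from \eqref{trans} is handled without spuriously claiming $1\lesssim\widetilde\eta_\pm(m)$. Both are routine once one is careful with the $\pm$ conventions and the fact that $k\in\{0,1\}$ is bounded; no analytic subtlety beyond the single-moment summability already assumed in \eqref{mainas} is needed, and this is exactly why $\sigma=1$ suffices for Theorem \ref{MT} in the resonant case with no loss. The proof therefore reduces to writing out these two estimates and collecting constants.
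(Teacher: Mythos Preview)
Your approach has a genuine gap that is not just bookkeeping. After interchanging the order of summation in the double sum you correctly arrive (in the $+$ case, writing $c_n:=|a(n)-\tfrac12|+|b(n)|$) at
\[
\sum_{\ell=m}^{+\infty} \sum_{n=\lfloor (k+\ell)/2\rfloor}^{+\infty} c_n \;=\; \sum_{n \ge \lfloor (k+m)/2\rfloor} (2n-m-k+2)\,c_n,
\]
but this is \emph{not} controlled by $\widetilde\eta_+(m)=\sum_{n\ge\lfloor(m+1)/2\rfloor}c_n$. The weight $2n-m+O(1)$ is small only for $n$ near $m/2$; for $n\gg m$ it is of order $n$, and under $\sigma=1$ the best you get is the first-moment tail $\sum_{n\ge m/2}(1+n)c_n$, which tends to zero but is in general \emph{much larger} than $\widetilde\eta_+(m)$. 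Concretely, take $c_n$ supported on $n=2^j$ with $c_{2^j}=4^{-j}$: then $\sum_n n\,c_n<\infty$, yet for $m=2^{J+1}$ one has $\widetilde\eta_+(m)\asymp 4^{-J}$ while $\sum_{n\ge m/2}(2n-m)c_n\asymp 2^{-J}$, a ratio growing like $m$. Your sentence ``dominated by a constant times the unweighted tail only if we already have one extra moment --- which is precisely $\sigma=1$'' conflates two different things: $\sigma=1$ makes the weighted tail \emph{finite}, not $\lesssim\widetilde\eta_\pm(m)$. So each $\Phi_\pm^{(k)}(m)$ individually fails the bound \eqref{defetadisc}, and no cancellation in the combination $h_\pm=\Phi_\pm^{(1)}\vp_\pm(\hat z,0)-\Phi_\pm^{(0)}\vp_\pm(\hat z,1)$ is visible at the level of the crude estimate \eqref{trans}.

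The paper extracts precisely this hidden cancellation, but by a different mechanism: instead of estimating $h_\pm$ from its definition, it uses the Marchenko equation \eqref{marcheqdisc} to derive a discrete Volterra-type equation for $h_-$ itself,
\[
h_-(m)-\sum_{\ell=0}^{-\infty}h_-(\ell)\,v(\ell+m+1)=-\frac{\delta(0,m)}{K_-(0,0)}\vp_-(\hat z,1),\qquad v(\ell)=F_-(\ell)\hat z^{-\ell},
\]
whose kernel already obeys the \emph{single}-sum bound $|v(\ell)|\le C\,\widetilde\eta_-(\ell)$ from \eqref{fest}. Splitting off finitely many terms so that the remaining operator is a contraction and iterating then gives \eqref{defetadisc} directly. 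The point is that the Marchenko equation encodes the algebraic structure responsible for the correct decay of $h_\pm$; the transformation-operator bound \eqref{trans} alone is one moment too weak for this.
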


\begin{proof}
We only consider the minus case here. First of all we need the discrete version of the Marchenko equation, which is given by (\cite[Section ~10.3]{tjac}): 
\beq \label{marcheqdisc}
K_\pm(n,m) + \sum_{\ell=n}^{\pm \infty}K_\pm(n,\ell)F_\pm(\ell+m) =
\frac{\delta(n,m)}{K_\pm(n,n)}, \quad \pm m \geq \pm n,
\eeq
where for $F_\pm(\ell)$ we have the following estimate:
\beq \label{fest}
|F_\pm(\ell)|\leq C\sum^{ \pm \infty}_{n=\floor{\frac{\ell}{2}}}\Big(\Big|a(n)-\frac 1 2 \Big|+
|b(n)|\Big).
\eeq
After some calculations, which can be found in \cite[Lemma ~4.1]{emt}, the following equation for $h_-$ can be obtained in the case $\vp_-(\hat z,1)\vp_+(\hat z,1)\neq 0$ (the other case $\vp_-(\hat z,1)=\vp_+(\hat z,1)=0$ is similar, cf.\ \cite[Lemma ~4.1]{emt}): 
\beq\label{main3}
h_-(m) -\sum_{\ell=0}^{-\infty}
h_-(\ell)v(\ell+m+1)=-\frac{\delta(0,m)}{K_-(0,0)}\vp_-(\hat z,1), 
\eeq
where $v(\ell)=F_-(\ell)\hat z^{-\ell}$. Now let $-m \geq 2$. We rewrite \eqref{main3} as 
\begin{align*}
&h_-(m) -\sum_{\ell=N}^{-\infty} h_-(\ell)v(m+\ell+1)=-\widetilde{H}(m,N), \text{ where} \\
&\widetilde{H}(m,N)=-\frac{\delta(0,m)}{K_-(0,0)}\vp_-(\hat z,1)+\sum_{\ell=0}^{N} h_-(\ell)v(\ell+m+1)
\end{align*}
and $N \le 0$ such that $C \sum_{\ell=N}^{-\infty} \widetilde{\eta} (\ell)<1$ with $C$ given by $\eqref{fest}$. The estimate
\beq \label{Htest} 
|\widetilde{H}(s,N)| \le C(N) \widetilde{\eta}_-(m), \quad m \le -2,
\eeq
follows from monotonicity of $\widetilde{\eta}_-$ and $h_-(\cdot) \in \ell^\infty(\mathbb{Z}_-)$. Now we set
\[
h_{-, 0}(m)=\widetilde{H}(m,N), h_{-,k+1}(m)=\sum_{\ell=N}^{-\infty} h_{-, k}(\ell) v(\ell+m+1) . 
\]
We show that
\beq \label{hnest} 
\left| h_{-,k}(m) \right| \leq C(N) \widetilde{\eta}_-(m) \left(  C \sum_{\ell=N}^{-\infty} \widetilde{\eta}_- (\ell)  \right)^k, 
\eeq
where $C(N)$ is given by \eqref{Htest}. But this easily follows by induction, again using monotonicity of $\widetilde{\eta}_-$ and boundedness of $h_-$. 
\end{proof}

For later use we note that in the resonant case the Jost solutions are dependent at $\hat{z}$. If we define $\gamma$ via
\beq\label{def:gam}
\vp_+(\hat{z},n) = \gamma \vp_-(\hat{z},n),
\eeq
 then a straightforward calculation using the scattering relations \eqref{scatdisc} as well as $|T(k)|^2+|R_\pm(k)|^2=1$
shows
\beq\label{eq:trz}
T(\hat{z})= \frac{2\gamma}{1+\gamma^2}, \qquad R_\pm(\hat{z})= \pm\frac{1-\gamma^2}{1+\gamma^2}.
\eeq
In particular, all three quantities are real-valued since $\vp_\mp(\hat{z},n)\in\R$ and hence $\gamma\in\R$.
In the non-resonant case the scattering relations show
\beq
T(\hat{z})= 0, \qquad R_\pm(\hat{z})= -1.
\eeq

To establish Theorem \ref{mainthmdisc}, we also need the following generalization of Lemma~\ref{lem1rcdisc}:

\begin{lemma} \label{lem2rcdisc}
Let \eqref{mainas} be contained in $\ell_{j+1}^1(\Z)$. Then $\frac{d^l}{d z^l}(\frac{\vpt_\pm(z,n)-\vpt_\pm(\hat z,n)}{z-\hat z}) \in \A$ for $0\le l \le j-1$.
Moreover, for $\pm n \le \mp 1$, the $\A$-norms of these expressions do not depend on $n$. 
\end{lemma}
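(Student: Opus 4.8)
The plan is to deduce the statement from Lemma~\ref{lem1rcdisc} together with the elementary fact that, inside the Wiener algebra, division by the factor $z-\hat z$ --- which takes modulus-one values everywhere on the unit circle, since $|\hat z|=1$ --- costs exactly one derivative. Indeed, for any $f(z)=\sum_m\hat f(m)z^m\in\A$ the telescoping identity $\frac{z^m-\hat z^m}{z-\hat z}=\sum_{k=0}^{m-1}z^k\hat z^{m-1-k}$ (and its evident analogue for $m<0$, again using $|\hat z|=1$) gives $\bigl\|\frac{f(z)-f(\hat z)}{z-\hat z}\bigr\|_\A\le\sum_m|m|\,|\hat f(m)|=\|f'\|_\A$, and differentiating that identity $l$ times shows that the $\A$-norm of the $l$-th derivative of $\frac{f(z)-f(\hat z)}{z-\hat z}$ is bounded by a constant (depending only on $l$) times $\sum_{k=1}^{l+1}\|f^{(k)}\|_\A$. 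I would apply this with $f(z)=\vpt_\pm(z,n)$: since $\sigma=j+1$ is precisely the present hypothesis, Lemma~\ref{lem1rcdisc} puts $\frac{d^k}{dz^k}\vpt_\pm(z,n)$ in $\A$ for $0\le k\le j$ with $n$-independent norms on the relevant half-line, and hence $\frac{d^l}{dz^l}\bigl(\frac{\vpt_\pm(z,n)-\vpt_\pm(\hat z,n)}{z-\hat z}\bigr)\in\A$ for $0\le l\le j-1$, uniformly in $n$.

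To make the loss of exactly one unit of weight transparent, I would also give the direct argument from the transformation operators (I only consider the minus case; the plus case is symmetric). From $\vpt_-(z,n)=\sum_{m\ge0}\Kt_-(n,-m)z^m$ one obtains
\[
\frac{\vpt_-(z,n)-\vpt_-(\hat z,n)}{z-\hat z}=\sum_{k\ge0}\Bigl(\sum_{m\ge k+1}\Kt_-(n,-m)\,\hat z^{\,m-1-k}\Bigr)z^k ,
\]
so the $\A$-norm of its $l$-th derivative is $\lesssim\sum_{k\ge0}k^l\sum_{m\ge k+1}|\Kt_-(n,-m)|\lesssim\sum_{m\ge1}m^{l+1}|\Kt_-(n,-m)|$. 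Inserting \eqref{trans} with the universal constant (legitimate on the half-line under consideration) and writing $c(k)=|a(k)-\tfrac12|+|b(k)|$, this is $\lesssim\sum_{m\ge1}m^{l+1}\sum_{k\le\floor{(2n-m)/2}}c(k)$; interchanging the sums turns it into $\sum_k c(k)\sum_{1\le m\lesssim|k-n|}m^{l+1}\lesssim\sum_k c(k)\,|k-n|^{l+2}$. On that half-line $k$ lies beyond $n$ on the same side of the origin, so $|k-n|\le 1+|k|$, whence the last sum is at most $\|c\|_{\ell^1_{l+2}}\le\|c\|_{\ell^1_{j+1}}$ as soon as $l+2\le j+1$, i.e.\ $l\le j-1$, with a bound independent of $n$. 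Here the division by $z-\hat z$ supplies one of the two surplus powers of $|k-n|$ (the factor $m$ in $\frac{z^m-\hat z^m}{z-\hat z}$) and the geometric summation over $m$ the other, on top of the $l$ powers used up by differentiation --- which is exactly why one loses one derivative relative to Lemma~\ref{lem1rcdisc}.

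I expect the only genuinely delicate step to be this last, $n$-uniform, estimate, which confines $n$ to the half-line where \eqref{trans} carries a universal constant and where, consequently, the indices $k$ contributing to $\Kt_\pm(n,\pm m)$ satisfy $|k-n|\le 1+|k|$, so that the two extra powers of $|k-n|$ produced by the division and the differentiation are absorbed into the weight $(1+|k|)^{j+1}$; everything else --- the telescoping identity, the interchange of summations, the irrelevance of the sign of $\hat z$ (only $|\hat z|=1$ is used), and the harmless vanishing of the $m=0$ term --- is routine.
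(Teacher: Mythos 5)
Your proof is correct and follows essentially the same route as the paper's: both rest on the telescoping identity $\frac{z^{m}-\hat z^{m}}{z-\hat z}=\sum_{k=0}^{m-1}z^{k}\hat z^{m-1-k}$ applied termwise to the series \eqref{jostker}, a rearrangement of the resulting double sum, and the kernel estimate \eqref{trans}; your abstract ``division by $z-\hat z$ costs one derivative'' lemma is just a repackaging of that computation, and your second paragraph supplies the quantitative details the paper leaves implicit (your aside that the factor $z-\hat z$ ``takes modulus-one values'' on the circle is a slip, but only $|\hat z|=1$ is actually used, so nothing is affected). One remark: your argument --- like the paper's own proof and the way the lemma is applied in Theorem~\ref{mainthmdisc} --- yields $n$-uniformity on the half-lines $\pm n\ge 0$, i.e.\ the same side as in Lemma~\ref{lem1rcdisc}, so the condition ``$\pm n\le\mp 1$'' in the printed statement appears to be a sign typo; on the opposite half-line neither the constant $C_\pm(n)$ in \eqref{trans} nor your key bound $|k-n|^{l+2}\le(1+|k|)^{l+2}$ is available.
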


\begin{proof}
By \eqref{jostker} we have that 
\begin{align*}
\frac{\vpt_\pm(z,n)-\vpt_\pm(\hat z,n)}{z-\hat z}&= \sum_{\ell=\pm 1}^{\pm \infty} \Kt_\pm(n,\ell) \frac{z^{\pm \ell}-\hat z^{\pm \ell}}{z-\hat z}=\sum_{\ell=\pm 1}^{\pm \infty} \Kt_\pm(n,\ell) \sum_{k=0}^{\pm \ell -1} z^k \hat{z}^{\pm \ell-1-k} \\
&=\sum_{\ell=0}^{\pm \infty} (\sum_{k=\ell \pm 1}^{\pm \infty} \Kt_\pm(n,k) \hat{z}^{\ell+k-1}) z^{\pm \ell}.
\end{align*}
Using this  we conclude that the corresponding derivatives of the previous expressions have an $\A$-norm bound independent from $n$, if $\pm n \le \mp 1$ by \eqref{mainas}.
\end{proof}

In a similar way, we obtain the following lemma:

\begin{lemma} \label{lem4rcdisc}
Let \eqref{mainas} be contained in $\ell_{j+1}^1(\Z)$. Then $\frac{d^l}{d z^l}(\frac{\widetilde{\Psi}_\pm(z)-\widetilde{\Psi}_\pm(\hat z)}{z-\hat z})$ as well as $\frac{d^l}{d z^l}(\widetilde{\Psi}_\pm(z))$ are elements of $\A$ for $0\le l \le j-1$, where $\widetilde{\Psi}_\pm(z)$ are defined in \eqref{defh}. 
\end{lemma}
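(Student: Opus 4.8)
The plan is to repeat, almost verbatim, the computations in the proofs of Lemma~\ref{lem1rcdisc} and Lemma~\ref{lem2rcdisc}, the only change being that the bound \eqref{trans} on the transformation operators is replaced by the bound \eqref{defetadisc} on the sequence $h_\pm$, established in Lemma~\ref{lem3rcdisc}. By \eqref{defh}, $\widetilde{\Psi}_\pm$ is already presented as a one-sided power series
\[
\widetilde{\Psi}_\pm(z)=\sum_{\ell=\frac{1\pm1}{2}}^{\pm\infty}h_\pm(\ell)\,\hat z^{\pm\ell}\,z^{\pm\ell},
\]
so that its $l$-th derivative multiplies the coefficient of $z^{\pm\ell}$ by a fixed polynomial in $\ell$ of degree $l$ (and shifts the exponent). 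Combining this with \eqref{defetadisc} and the standard interchange of summation that converts a tail sum of the coefficients weighted by $|\ell|^l$ into the $(l+1)$-st moment, one obtains
\begin{align*}
\Vert \tfrac{d^l}{dz^l}\widetilde{\Psi}_\pm \Vert_\A
&\le C_l\sum_{\ell}|\ell|^l\,|h_\pm(\ell)|
\le \hat C\,C_l\sum_{\ell}|\ell|^l\,\widetilde{\eta}_\pm(\ell)\\
&\le C_l'\sum_{n}(1+|n|)^{l+1}\Big(\Big|a(n)-\tfrac12\Big|+|b(n)|\Big),
\end{align*}
which is finite by \eqref{mainas} as soon as $l+1\le j+1$. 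This already settles the statement for $\widetilde{\Psi}_\pm$ itself (the argument in fact allows $0\le l\le j$ here).

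For the divided difference I would copy the telescoping step from the proof of Lemma~\ref{lem2rcdisc}. Since $|\hat z|=1$, the rotation $z\mapsto\hat z z$ is an isometry of $\A$ that commutes with $\tfrac{d^l}{dz^l}$ up to the constant $\hat z^{\,l}$, so $\frac{\widetilde{\Psi}_\pm(z)-\widetilde{\Psi}_\pm(\hat z)}{z-\hat z}$ has exactly the structure of the quantity treated in Lemma~\ref{lem2rcdisc}, with $\Kt_\pm(n,\cdot)$ replaced by $h_\pm$; expanding $\frac{z^{\pm\ell}-\hat z^{\pm\ell}}{z-\hat z}$ as a geometric sum and interchanging the two summations shows that its $k$-th Fourier coefficient $g_\pm(k)$ is a unimodular multiple of the tail $\sum_{\pm\ell>\pm k}h_\pm(\ell)\hat z^{\pm 2\ell}$, hence $|g_\pm(k)|\le\sum_{\pm\ell\ge\pm k}|h_\pm(\ell)|$. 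Since the $l$-th derivative of $\sum_k g_\pm(k)z^{\pm k}$ has $\A$-norm at most $C_l\sum_k|k|^l|g_\pm(k)|$, applying \eqref{defetadisc} and then two further interchanges of summation gives
\begin{align*}
C_l\sum_k|k|^l\,|g_\pm(k)|
&\le \hat C\,C_l\sum_k|k|^l\sum_{\pm\ell\ge\pm k}\widetilde{\eta}_\pm(\ell)\\
&\le C_l''\sum_n(1+|n|)^{l+2}\Big(\Big|a(n)-\tfrac12\Big|+|b(n)|\Big),
\end{align*}
which is finite by \eqref{mainas} as soon as $l+2\le j+1$, i.e.\ $l\le j-1$. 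This yields the remaining assertion.

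The computations are routine; the only point that requires care is the bookkeeping of the repeated interchanges of summation and the moment count they produce. The extra tail summation appearing in the divided-difference case (and absent for $\widetilde{\Psi}_\pm$ itself) is precisely what forces the restriction $l\le j-1$ there, mirroring the loss of one derivative already visible on comparing Lemma~\ref{lem2rcdisc} with Lemma~\ref{lem1rcdisc}. Beyond this I do not expect any genuinely new obstacle past what was handled in Lemmas~\ref{lem1rcdisc}, \ref{lem3rcdisc} and~\ref{lem2rcdisc}.
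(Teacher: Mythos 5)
Your proposal is correct and follows exactly the route the paper takes: the paper's proof of this lemma is literally ``as in Lemma~\ref{lem2rcdisc}, using the estimate for $h_\pm$ from Lemma~\ref{lem3rcdisc}'', i.e.\ the same telescoping of the divided difference and the same tail-sum/moment bookkeeping you carry out. Your explicit moment count (loss of one order for the divided difference, hence $l\le j-1$) matches the paper's implicit one, and your observation that $\tfrac{d^l}{dz^l}\widetilde{\Psi}_\pm\in\A$ already for $l\le j$ is consistent with Lemma~\ref{lem1rcdisc}.
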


\begin{proof}
This follows as in the previous lemma using the estimate for $h_\pm$ from Lemma~\ref{lem3rcdisc}.
\end{proof}

Combining the last results we obtain:

\begin{theorem} \label{thm6rcdisc}
Let \eqref{mainas} be contained in $\ell_{j+1}^1(\Z)$ and let $\hat z\in\{-1, 1\}$.  Then \[\frac{d^l}{d z^l}\left(\frac{T(z)-T(\hat z)}{z-\hat z}\right)\in\A,\ \ \frac{d^l}{d z^l}\left(\frac{R_\pm(z)-R_\pm(\hat z)}{z-\hat z}\right) \in\A\ \ \mbox{ for}\ 0\le l \le j-1.\]
\end{theorem}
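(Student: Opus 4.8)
The plan is to mimic the proof of Theorem~\ref{MT}, but now dividing by the extra factor $z-\hat z$ everywhere. Recall that in the resonant case with $W(\hat z)=0$ we have the representation
\[
\frac{W(z)}{z-\hat z}=\frac{a(0)}{z}\begin{cases}\Big(\frac{\vpt_+(z,0)}{\vpt_-(\hat z,0)}\widetilde{\Psi}_-(z)-\frac{\vpt_-(z,0)}{\vpt_+(\hat z,0)}\widetilde{\Psi}_+(z)\Big),&\vp_+(\hat z,0)\vp_-(\hat z,0)\ne 0,\\ z\hat z\Big(\frac{\vpt_+(z,1)}{\vpt_-(\hat z,1)}\widetilde{\Psi}_-(z)-\frac{\vpt_-(z,1)}{\vpt_+(\hat z,1)}\widetilde{\Psi}_+(z)\Big),&\vp_+(\hat z,1)\vp_-(\hat z,1)\ne 0,\end{cases}
\]
and analogous formulas for $W_\pm(z)/(z-\hat z)$. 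First I would write $T(z)-T(\hat z)$, multiply out by the common denominator $W(z)$, and factor: $T(z)-T(\hat z)=\frac{1}{W(z)}\big((z-z^{-1})/(2\mathrm i)-T(\hat z)W(z)\big)$. The numerator vanishes at $z=\hat z$ (by definition of $T(\hat z)$ together with $W(\hat z)=0$, or in the non-resonant case simply because $\hat z-\hat z^{-1}=0$ and $T(\hat z)=0$), so I can divide it by $z-\hat z$. The point is to show that the quotient $\big((z-z^{-1})/(2\mathrm i)-T(\hat z)W(z)\big)/(z-\hat z)$, together with $j-1$ derivatives, lies in $\A$: the polynomial part $(z-z^{-1})/(2\mathrm i)$ is entire and its difference quotient is trivially in $\A$, while $W(z)/(z-\hat z)$ handled via the representation above needs only the building blocks $\vpt_\pm(z,n)$ (in $\A$ with $j$ derivatives, Lemma~\ref{lem1rcdisc}) and $\widetilde{\Psi}_\pm(z)$ (in $\A$ with $j-1$ derivatives, Lemma~\ref{lem4rcdisc}); a product of such functions stays in $\A$ with $j-1$ derivatives since $\A$ is a Banach algebra closed under the needed differentiations.

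Then I would divide $T(z)$ itself by $z-\hat z$ after subtracting $T(\hat z)$ by writing
\[
\frac{T(z)-T(\hat z)}{z-\hat z}=\frac{1}{(z-\hat z)}\cdot\frac{1}{W(z)}\Big(\frac{z-z^{-1}}{2\mathrm i}-T(\hat z)W(z)\Big)=\frac{(z-\hat z)^{-1}\big(\frac{z-z^{-1}}{2\mathrm i}-T(\hat z)W(z)\big)}{(z-\hat z)}\cdot\frac{z-\hat z}{W(z)} .
\]
By Theorem~\ref{MT} (or rather its proof) we already know $\frac{z-\hat z}{W(z)}\in\A$ with $j$ derivatives in the generic case, and in the doubly-resonant case $W(-\hat z)=0$ one instead gets $\frac{(z-\hat z)(z+\hat z)}{W(z)}\in\A$, which still gives what is needed after multiplying by the extra smooth factor $(z+\hat z)^{-1}$ localized away from $-\hat z$ via a cut-off, exactly as in the proof of Theorem~\ref{MT}. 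Combined with the difference-quotient claim from the previous paragraph, and the fact that the product of two elements of $\A$ with $j-1$ (resp. $j$) derivatives again has $j-1$ derivatives in $\A$, this yields $\frac{d^l}{dz^l}\big(\frac{T(z)-T(\hat z)}{z-\hat z}\big)\in\A$ for $0\le l\le j-1$.

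For the reflection coefficients I would proceed identically: write $R_\pm(z)-R_\pm(\hat z)=\mp\frac{1}{W(z)}\big(W_\pm(z)\pm R_\pm(\hat z)W(z)\big)$, observe that the bracket vanishes at $z=\hat z$ (using $R_\pm(\hat z)=-1$ in the non-resonant case and the values \eqref{eq:trz} together with $W_\pm(\hat z)=-W(\hat z)=0$ in the resonant case — note $W_\mp(\hat z)=W(\vp_\pm(\hat z,1),\vp_\mp(\hat z^{-1},1))=W(\vp_\pm(\hat z,1),\vp_\mp(\hat z,1))=-W(\hat z)=0$ since $\hat z^{-1}=\hat z$), hence divide by $z-\hat z$, and use the representation of $W_\pm(z)/(z-\hat z)$ above in terms of $\vpt_\pm$ and $\widetilde{\Psi}_\pm$, multiplied by $\frac{z-\hat z}{W(z)}\in\A$. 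The main obstacle is bookkeeping the loss of one derivative: $\widetilde{\Psi}_\pm$ only carries $j-1$ derivatives in $\A$ (since its coefficients $h_\pm(m)$ satisfy the same tail estimate as $F_\pm$, which comes with one fewer moment than the raw coefficients), so one must check that dividing $W(z)$ or $W_\pm(z)$ by the extra factor $z-\hat z$ does not cost a further derivative — and indeed it does not, because in the representations the factor $z-\hat z$ has already been extracted algebraically via \eqref{hatW}, so no additional difference-quotient of $\widetilde\Psi_\pm$ is formed; one only needs the difference quotient of the smooth factors $\vpt_\pm(z,n)$, which retains $j$ derivatives by Lemma~\ref{lem2rcdisc}. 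Once this is verified, everything assembles routinely from the Banach-algebra property of $\A$ and Wiener's lemma.
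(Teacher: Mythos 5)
Your overall strategy---reducing everything to the representations of $W(z)/(z-\hat z)$ and $W_\pm(z)/(z-\hat z)$ from the proof of Theorem~\ref{MT} and then invoking the Banach algebra property of $\A$---is the right one (the paper offers no detailed argument here; the intended proof is precisely to combine Lemmas~\ref{lem2rcdisc} and~\ref{lem4rcdisc} with those representations). But your execution has a concrete gap. Your identity
\begin{equation*}
\frac{T(z)-T(\hat z)}{z-\hat z}=\frac{N(z)}{(z-\hat z)^2}\cdot\frac{z-\hat z}{W(z)},\qquad N(z)=\frac{z-z^{-1}}{2\I}-T(\hat z)W(z),
\end{equation*}
requires dividing $N$ by $z-\hat z$ \emph{twice}, while your first paragraph only justifies one division. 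The second division does go through in the resonant case, since $N'(\hat z)=\frac{1+\hat z^{-2}}{2\I}-T(\hat z)W'(\hat z)=0$ (this is exactly the statement $T(\hat z)=\lim_{z\to\hat z}T(z)$, which you should record), but carrying it out means forming the difference quotient of $W(z)/(z-\hat z)$ at $\hat z$, hence---via the product rule applied to its representation---the difference quotients of \emph{both} $\vpt_\pm(z,n)$ and $\widetilde{\Psi}_\pm(z)$. Your final paragraph explicitly asserts that ``no additional difference-quotient of $\widetilde{\Psi}_\pm$ is formed''; that is false for your own decomposition (and for any other: a difference quotient of $T$ forces one of $W(z)/(z-\hat z)$, whose building blocks include $\widetilde{\Psi}_\pm$). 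This is precisely what Lemma~\ref{lem4rcdisc} supplies, with the required $j-1$ derivatives, so the gap is fillable---but as written you have denied needing the key lemma rather than used it.

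Two further points. In the non-resonant case your factorization breaks down: there $N(z)/(z-\hat z)\to-\I\neq 0$ at $\hat z$, so $N(z)/(z-\hat z)^2\notin\A$; the compensation comes from $\frac{z-\hat z}{W(z)}$ vanishing at $\hat z$, so the factors must be regrouped (or simply note $\frac{T(z)}{z-\hat z}=\frac{z-z^{-1}}{2\I(z-\hat z)}\cdot\frac{1}{W(z)}$ with $1/W\in\A$ by Wiener's lemma). The same double-division issue recurs for $R_\pm$: one must check that $\frac{W_\pm(z)}{z-\hat z}\pm R_\pm(\hat z)\frac{W(z)}{z-\hat z}$ vanishes at $\hat z$ (continuity of $R_\pm$ with the value \eqref{eq:trz}) and then take one more difference quotient of both $W_\pm/(z-\hat z)$ and $W/(z-\hat z)$, again using Lemmas~\ref{lem2rcdisc} and~\ref{lem4rcdisc}. (Your remark that $\widetilde{\Psi}_\pm$ itself carries only $j-1$ derivatives is also slightly off---the estimate of Lemma~\ref{lem3rcdisc} gives $j$ derivatives for $\widetilde{\Psi}_\pm$ and $j-1$ for its difference quotient---but this does not affect the final count.)
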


%%%%%%%%%%%%%%%%%%%%%%%%%%%%%%%%%%%%%%%%%%%%%%%%%%%%%%%%%%%%%%%%%%%%%%%%%%%%%
\section{Dispersive Decay in the Discrete Case}
%%%%%%%%%%%%%%%%%%%%%%%%%%%%%%%%%%%%%%%%%%%%%%%%%%%%%%%%%%%%%%%%%%%%%%%%%%%%%

In this section we prove Theorem \ref{main1} and Theorem \ref{mainthmdisc}. For the one-parameter group of \eqref{jaceq} the spectral theorem  and Stone's formula imply
\beq\label{PP}
   \E^{-\I tH}P_{ac}
   =\frac 1{2\pi \I}\int\limits_{-1}^{1}
   \E^{-\I t\omega}(\cR(\omega+\I 0)- \cR(\omega-\I 0))\,d\omega,
\eeq
where $\cR(\omega)=(H-\omega)^{-1}$ is the resolvent of the Jacobi operator $H$
and the limit is understood in the strong sense \cite[Problem 4.3]{tschroe}.
For the kernel of the resolvent $R(z)=(H-\frac{z+z^{-1}}{2})^{-1}$, we have the following explicit formula for $0 < |z| \le 1$ (cf.\ \cite[(1.99)]{tjac}): 
\beq \label{reskerdisc}
[R(z)](n,k) = \frac{1}{W(z)} \left\{ \begin{array}{cc}
\vp_+(z,n) \vp_-(z,k)
\;\; \mbox{for} \;\; n \ge k, \\[2mm]
\vp_+(z,k) \vp_-(z,n)
 \;\; \mbox{for} \;\; n\le k. \end{array} \right.
\eeq
Formulas \eqref{PP} and \eqref{reskerdisc} then lead to the following explicit representation of the kernel of the propagator $\E^{- \I t H}P_{ac}$:
\beq \label{mainkerdisc3} 
\left[ \E^{-\I t H}P_{ac} \right](n,k) = 
\frac{1}{2 \pi} \int_{-\pi}^{\pi} \E^{-\I t \cos \theta}
\vp_+(\E^{\I\theta},n) \vp_-(\E^{\I\theta},k) T(\E^{\I\theta}) d\theta.
\eeq
We also need a small variant of the van der Corput lemma, which can be found in \cite[Lemma ~5.1]{EKT}:

\begin{lemma} \label{vdcorput}
Consider the oscillatory integral
$I(t) = \int_a^b \E^{\I t v(\theta)} f(\theta) d\theta$ with $[a,b]\subset\R$ some compact interval
and $v(\theta)$ real-valued.
Let $\min\limits_{\theta\in[a,b]}|v^{(j)}(\theta)|=m_j>0$ for some $j\ge 2$ and let 
\beq\label{defAab}
f(\theta)=\sum_{m\in\Z} \hat f (m) \E^{\I m\theta } \quad \mbox{for } \theta\in[a,b] \quad \mbox{with }  \|f\|_1= \sum_{m\in\Z}|\hat f(m)|<\infty.
\eeq
Then
\beq\label{mainest2}
|I(t)| \le \frac{C_j \|\hat{f}\|_1}{(m_j t)^{1/j}}, \ \mbox{for}\  t\ge 1,
\eeq
where $C_j$ is an universal constant.
\end{lemma}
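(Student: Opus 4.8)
The plan is to exploit the Fourier expansion of the amplitude, which is precisely what distinguishes this variant from the textbook van der Corput estimate: instead of requiring $f$ to have bounded variation, we only assume $f\in\A$ (summable Fourier coefficients), and we convert the amplitude into a superposition of phase modulations. Expanding $f(\theta)=\sum_{m\in\Z}\hat f(m)\E^{\I m\theta}$ on $[a,b]$ as in \eqref{defAab} and interchanging summation with integration gives
\[
I(t)=\sum_{m\in\Z}\hat f(m)\int_a^b \E^{\I t v(\theta)+\I m\theta}\,d\theta.
\]
This interchange is legitimate since $\sum_{m}|\hat f(m)|<\infty$ and each inner integral is bounded in modulus by $b-a$, so dominated convergence applies.

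The key step is to absorb the linear factor $\E^{\I m\theta}$ into the phase. For each fixed $m$ I would set $v_m(\theta)=v(\theta)+\frac{m}{t}\theta$, so that the $m$-th integral reads $\int_a^b\E^{\I t v_m(\theta)}\,d\theta$. Because $j\ge 2$, the added linear term $\frac{m}{t}\theta$ is annihilated by $j$-fold differentiation, whence $v_m^{(j)}=v^{(j)}$ and therefore
\[
\min_{\theta\in[a,b]}\bigl|v_m^{(j)}(\theta)\bigr|=m_j\qquad\text{for every }m\in\Z.
\]
Now I would invoke the classical amplitude-free van der Corput lemma for $j\ge 2$: whenever $|v_m^{(j)}|\ge m_j$ on $[a,b]$, one has $\bigl|\int_a^b\E^{\I t v_m}\,d\theta\bigr|\le C_j(m_j t)^{-1/j}$ with $C_j$ depending only on $j$. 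Summing this uniform estimate against $|\hat f(m)|$ yields
\[
|I(t)|\le\sum_{m\in\Z}|\hat f(m)|\,\frac{C_j}{(m_j t)^{1/j}}=\frac{C_j\,\|\hat f\|_1}{(m_j t)^{1/j}},
\]
which is exactly \eqref{mainest2}.

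The one point that must be used carefully — and the crux of why the argument closes — is that the constant $C_j$ in the classical lemma is \emph{universal}: it is independent of the interval $[a,b]$, of higher derivatives of the phase, and in particular of the shift parameter $m$ entering $v_m$. This uniformity is exactly the feature that lets a single constant survive the summation over $m$. It is also where the hypothesis $j\ge 2$ is essential, since for $j=1$ the classical estimate additionally requires $v'$ to be monotone, a property that is not preserved uniformly in $m$ after adding $\frac{m}{t}\theta$. Everything else — the term-by-term integration and the observation that second and higher derivatives kill the linear modulation — is routine.
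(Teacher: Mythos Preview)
Your argument is correct and is exactly the standard proof of this variant: expand $f$ in its Fourier series, absorb each $\E^{\I m\theta}$ into the phase, note that for $j\ge 2$ the $j$-th derivative is unaffected, apply the classical amplitude-free van der Corput bound with its universal constant, and sum. The paper itself does not prove this lemma but simply cites \cite[Lemma~5.1]{EKT}, where precisely this argument is carried out; so your approach coincides with the intended one.
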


The proofs of Theorem \ref{main1} and the non-resonant case of Theorem~\ref{mainthmdisc} can now be done in exactly the same
way as in \cite[Theorem 5.3, Theorem 6.1]{EKT}.
Thus it remains to consider the resonant case of our main Theorem~\ref{mainthmdisc}. 
As a first step we have a closer look at our projection operators in the next lemma:

\begin{lemma} \label{lem5rcdisc}
The following expressions for the kernels of our Projectors $P_{\hat z}$ are valid:
\begin{align} \label{lem5rcdisc1}
&\frac{\E^{-\I t}}{\sqrt{-2\pi \I t}}[P_1](n,k)=\vp_+(1,n) \vp_-(1,k) T(1) \left[ \frac{1}{2 \pi}\int_{-\frac{\pi}{2}}^{\frac{\pi}{2}} \E^{-\I t \cos \theta} d \theta - \frac{1}{\I t \pi}\right]+\mathcal{O}(t^{-\frac{3}{2}}),  \\
&\frac{\E^{\I t}}{\sqrt{2\pi \I t}}[P_{-1}](n,k)=\vp_+(-1,n) \vp_-(-1,k) T(-1) \left[ \frac{1}{2 \pi}\int_{\frac{\pi}{2}}^{\frac{3 \pi}{2}} \E^{-\I t \cos \theta} d \theta + \frac{1}{\I t \pi} \right]+\mathcal{O}(t^{-\frac{3}{2}}). \nn
\end{align}
\end{lemma}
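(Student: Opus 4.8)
The plan is to evaluate the stationary phase contribution of the oscillatory integral \eqref{mainkerdisc3} at the two critical points of the phase $\cos\theta$, namely $\theta=0$ and $\theta=\pm\pi$, which correspond to the band edges $\hat z=1$ and $\hat z=-1$. First I would localize: introduce a smooth partition of unity $1=\chi_+(\theta)+\chi_-(\theta)+\chi_0(\theta)$ on $[-\pi,\pi]$, where $\chi_+$ is supported in a small neighborhood of $\theta=0$, $\chi_-$ near $\theta=\pm\pi$, and $\chi_0$ away from both. On the support of $\chi_0$ the phase has non-vanishing first derivative $|v'(\theta)|=|\sin\theta|\ge m_1>0$, so repeated integration by parts (using that $\vp_\pm(\E^{\I\theta},n)\vp_\mp(\E^{\I\theta},k)T(\E^{\I\theta})$ and its derivatives up to the needed order lie in $\A$, by Lemma~\ref{lem1rcdisc} and Theorem~\ref{MT}) gives a contribution that is $\mathcal{O}(t^{-N})$ for any $N$, in particular negligible compared to $t^{-4/3}$; this handles the $\chi_0$ piece entirely and shows the claimed identity only needs to be checked for the $\chi_+$ and $\chi_-$ pieces separately. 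By symmetry (the substitution $\theta\mapsto\pi-\theta$, which sends $\cos\theta\mapsto-\cos\theta$ and interchanges the roles of $\hat z=\pm1$) it suffices to treat the $\chi_+$ piece near $\theta=0$ and deduce the $\chi_-$ statement.

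Next, near $\theta=0$ I would write $T(\E^{\I\theta})=T(1)+\big(T(\E^{\I\theta})-T(1)\big)$ and similarly split off the values of the Jost solutions, i.e.\ write $\vp_+(\E^{\I\theta},n)\vp_-(\E^{\I\theta},k)=\vp_+(1,n)\vp_-(1,k)+\big(\text{remainder}\big)$. The remainder terms all carry a factor that vanishes at $\theta=0$ and, crucially, after dividing by $(\E^{\I\theta}-1)$ still lies in $\A$ near $\theta=0$ together with one derivative — this is exactly the content of Theorem~\ref{thm6rcdisc} and Lemma~\ref{lem2rcdisc} (with $j=2$, so $l\le 1$ derivatives available). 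Writing $\E^{\I\theta}-1=\I\theta+\mathcal{O}(\theta^2)$, the remainder integrand is $\theta$ times a function whose derivative is still an $\A$-type symbol on the support of $\chi_+$; since the phase $\cos\theta$ has $v'(0)=0$ but $v''(0)=-1\ne0$, an integration by parts (moving the $\theta$-factor onto $\E^{-\I t\cos\theta}$ via $\frac{d}{d\theta}\E^{-\I t\cos\theta}=\I t\sin\theta\,\E^{-\I t\cos\theta}$ and $\sin\theta\sim\theta$) produces a gain of $t^{-1}$ and leaves an oscillatory integral with a stationary point of order two, to which Lemma~\ref{vdcorput} with $j=3$ — using the extra derivative of the symbol that Theorem~\ref{thm6rcdisc}/Lemma~\ref{lem2rcdisc} guarantees — applies and gives a further $t^{-1/3}$, for a total of $\mathcal{O}(t^{-4/3})$. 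This is precisely why the hypothesis $\ell^1_3$ (i.e.\ $j=3$, so $\sigma=j+1$ is not quite right — here $\sigma=3$, $j=2$ in the lemma numbering) is needed in the resonant case.

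The remaining main term is the constant-coefficient integral $\vp_+(1,n)\vp_-(1,k)T(1)\cdot\frac{1}{2\pi}\int_{-\pi}^{\pi}\chi_+(\theta)\,\E^{-\I t\cos\theta}\,d\theta$, and the task is to identify this, up to $\mathcal{O}(t^{-3/2})$, with the expression in \eqref{lem5rcdisc1}. Here I would first replace $\chi_+$ by the sharp cutoff onto $[-\tfrac\pi2,\tfrac\pi2]$: the difference is an integral over a region where $\cos\theta$ has non-vanishing derivative near the endpoints $\pm\tfrac\pi2$ but the cutoff is not smooth, so the error is only $\mathcal{O}(t^{-1})$ — not good enough by itself. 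To see the $t^{-1}$ terms cancel between the $\chi_+$ and sharp-cutoff pieces and to extract the explicit $\mp\frac{1}{\I t\pi}$ correction, I would instead directly analyze $\frac{1}{2\pi}\int_{-\pi/2}^{\pi/2}\E^{-\I t\cos\theta}\,d\theta$ by classical stationary phase at $\theta=0$: this gives the leading $\frac{\E^{-\I t}}{\sqrt{-2\pi\I t}}$ term plus a contribution from the endpoints $\theta=\pm\tfrac\pi2$ (where $\cos(\pm\tfrac\pi2)=0$, $\sin(\pm\tfrac\pi2)=\pm1$), which by a single integration by parts contributes exactly $-\frac{1}{\I t\pi}$ with the next order $\mathcal{O}(t^{-3/2})$ — and the same analysis near $\theta=\pm\pi$ produces $+\frac{1}{\I t\pi}$, accounting for the opposite sign in the $P_{-1}$ line. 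Thus the identity in the lemma is nothing but bookkeeping of the endpoint-at-$\pm\pi/2$ versus interior-stationary-point contributions. The main obstacle, and the place requiring genuine care rather than routine estimates, is the second paragraph: verifying that every remainder term genuinely has one more derivative in $\A$ than naively expected (so that the $t^{-1}$ integration-by-parts gain is legitimate and Lemma~\ref{vdcorput} with $j=3$ can be invoked), i.e.\ correctly tracking that dividing the Jost-solution and scattering-coefficient differences by $(z-\hat z)$ costs exactly one moment, which is what forces $\sigma=3$ and is supplied by Lemma~\ref{lem2rcdisc} and Theorem~\ref{thm6rcdisc}.
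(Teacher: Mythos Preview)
You have conflated Lemma~\ref{lem5rcdisc} with the proof of Theorem~\ref{mainthmdisc}. The lemma makes no reference to the propagator integral \eqref{mainkerdisc3}: its left-hand side is simply the projector kernel $[P_{\hat z}](n,k)$ multiplied by the explicit scalar $\E^{\mp\I t}/\sqrt{\mp 2\pi\I t}$, and the claim is that this equals $\vp_+(\hat z,n)\vp_-(\hat z,k)T(\hat z)$ times another explicit scalar, up to $\mathcal{O}(t^{-3/2})$. Nearly everything in your first two paragraphs---the partition of unity, the splitting $T(\E^{\I\theta})=T(1)+(T(\E^{\I\theta})-T(1))$, the remainder estimates via Lemma~\ref{lem2rcdisc} and Theorem~\ref{thm6rcdisc}, the $t^{-4/3}$ bound from Lemma~\ref{vdcorput}---belongs to the proof of Theorem~\ref{mainthmdisc}, where the present lemma is \emph{used}, not to the lemma itself. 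You have the logical dependence backwards.

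What the lemma actually requires is two independent ingredients, and you address neither squarely. The first is the algebraic identity $[P_{\hat z}](n,k)=\vp_+(\hat z,n)\vp_-(\hat z,k)T(\hat z)$, which is entirely missing from your proposal: it follows from writing $\vp_{\hat z}=c_\pm\vp_\pm(\hat z,\cdot)$, using the normalization in Theorem~\ref{mainthmdisc} together with \eqref{def:gam} to determine $c_\pm$ in terms of $\gamma$, and then invoking \eqref{eq:trz} to see that the product $c_+c_-$ matches $T(\hat z)$. The second is the purely scalar asymptotic
\[
\frac{\E^{-\I t}}{\sqrt{-2\pi\I t}} \;=\; \frac{1}{2\pi}\int_{-\pi/2}^{\pi/2}\E^{-\I t\cos\theta}\,d\theta \;-\; \frac{1}{\I t\pi} \;+\; \mathcal{O}(t^{-3/2}),
\]
which the paper obtains cleanly by recognizing the integral as $\tfrac12\bigl(J_0(t)-\I H_0(t)\bigr)$ and then quoting the standard relations $H_0(t)-Y_0(t)=\tfrac{2}{\pi t}+\mathcal{O}(t^{-3})$ and the Hankel-function asymptotics for $J_0\pm\I Y_0$. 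Your stationary-phase-plus-endpoint sketch in the third paragraph gestures at this second point but does not actually deliver the $\mathcal{O}(t^{-3/2})$ remainder, and introducing $\chi_+$ only muddies the issue.
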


\begin{proof}
It is clear that $\vp_{\hat{z}}(n) = c_\pm \vp_\pm(\hat{z},n)$ and by our normalization $c_-^{-2}+c_+^{-2} = 2$. Using \eqref{def:gam} we have $c_-= \gamma c_+$ and
hence $c_\pm^2=\frac{1+\gamma^{\mp1}}{2}$. Moreover, \eqref{eq:trz} implies $c_+ c_- T(\hat{z})=1$ and hence $[P_{\hat z}](n,k)=\vp_+(\hat z,n) \vp_-(\hat z,k) T(\hat z)$.
Furthermore, by \cite[10.9]{dlmf} and \cite[11.5]{dlmf}, we get
\[
\frac{1}{2}(J_0(t)- \I H_0(t))=\frac{1}{2 \pi}\int_{-\frac{\pi}{2}}^{\frac{\pi}{2}} \E^{-\I t \cos \theta} d \theta, 
\]
where $J_0(t)$ denotes the Bessel function, and $H_0(t)$ the Struve function, both of order $0$ (cf. \cite[10.2]{dlmf} and \cite[11.2]{dlmf}). Moreover, by \cite[11.6]{dlmf} the following asymptotics hold
\[
H_0(t)-Y_0(t)=\frac{2}{\pi t}+\mathcal{O}(t^{-3}),
\]
where $Y_0(t)$ denotes the Neumann function of order $0$ (cf.\ \cite[10.2]{dlmf}). Since by \cite[10.4]{dlmf} $H_{0}^{(1)}(t)=J_0(t)+ \I Y_0(t)$ and $H_{0}^{(2)}(t)=J_0(t)- \I Y_0(t)$, the claim follows using the asymptotics \cite[10.17.5, 10.17.6]{dlmf}. The argument for the second formula is analogous.
\end{proof}

Since we have got everything we need,we can finish the proof of our main theorem now: 

\begin{proof}[Proof of Theorem \ref{mainthmdisc}]
Using \eqref{mainkerdisc3}, we get 
\[
[\E^{-\I t H}P_{ac}](n,k)=\frac{1}{2 \pi} \int_{-\pi}^{\pi} \E^{-\I t \cos \theta} G(\theta,n,k) d\theta,
\]
where
\beq \label{bigG}
G(\theta, n, k)=\E^{\I |n-k| \theta} \vpt_+(\E^{\I\theta},k) \vpt_-(\E^{\I\theta},n)T(\E^{\I\theta}).
\eeq
Taking advantage of the fact that all functions here are $2 \pi$-periodic, we can integrate along the interval $[-\frac{\pi}{2}, \frac{3 \pi}{2}]$ instead. 
By Lemma~\ref{lem5rcdisc},
\begin{align}\nn
&\mathcal P(n,k,t):=[\E^{-\I t H}P_{ac}-\frac{\E^{-\I t}}{\sqrt{-2\pi \I t}}P_1-\frac{\E^{\I t}}{\sqrt{2\pi \I t}}P_{-1}](n,k)\\ \label{discmain1}
& \quad=\frac{1}{2 \pi}\int_{-\frac{\pi}{2}}^{\frac{\pi}{2}} \E^{-\I t \cos \theta} \big(G(\theta, n, k)-G(0, n, k) \big) d \theta + G(0, n, k)\frac{1}{\I t \pi} \\ \label{discmain2}
& \qquad +\frac{1}{2 \pi}\int_{\frac{\pi}{2}}^{\frac{3 \pi}{2}} \E^{-\I t \cos \theta} \big(G(\theta, n, k)-G(\pi, n, k) \big) d \theta - G(\pi, n, k)\frac{1}{\I t \pi} + \mathcal{O}(t^{-\frac{3}{2}}). 
\end{align} 
If we integrate \eqref{discmain1} by parts, we obtain
\begin{align*}
&\frac{1}{2 \pi}\int_{-\frac{\pi}{2}}^{\frac{\pi}{2}} \E^{-\I t \cos \theta} \big(G(\theta, n, k)-G(0, n, k) \big) d \theta + G(0, n, k)\frac{1}{\I t \pi}  = \frac{1}{2 \pi \I t} G\left(\frac{\pi}{2}, n, k\right)\\
&+\frac{1}{2 \pi \I t} G\left(-\frac{\pi}{2}, n, k\right)
 - \frac{1}{2 \pi \I t} \int_{-\frac{\pi}{2}}^{\frac{\pi}{2}} \E^{-\I t \cos \theta} \frac{d}{d \theta}\left( \frac{G(\theta, n, k)-G(0, n, k)}{\sin \theta} \right) d \theta.
\end{align*}
If we do the same in \eqref{discmain2} and use $G(\frac{3\pi}{2}, n, k)=G(-\frac{\pi}{2}, n, k)$ we see that all the terms of order $\frac 1 t$ vanish and hence 
\begin{align} \label{maintermdisc}
 \mathcal P(n,k,t)&=\frac{\I}{2 \pi t} \int_{-\frac{\pi}{2}}^{\frac{\pi}{2}} \E^{-\I t \cos \theta} \frac{d}{d \theta}\left( \frac{G(\theta, n, k)-G(0, n, k)}{\sin \theta} \right) d \theta  \\ \nn
&+\frac{\I}{2 \pi t} \int_{\frac{\pi}{2}}^{\frac{3\pi}{2}} \E^{-\I t \cos \theta} \frac{d}{d \theta}\left( \frac{G(\theta, n, k)-G(\pi, n, k)}{\sin \theta} \right) d \theta +  \mathcal{O}(t^{-\frac{3}{2}}),
\end{align}
where we neglect the summand of order $t^{-\frac{3}{2}}$ from now on. 
The other two summands are treated separately and we show the desired $t^{-\frac{4}{3}}$ time decay for each expression. 
Since the calculations are similar, we only focus on \eqref{maintermdisc}. 
Next, to apply Lemma \ref{vdcorput}, we split the domain of integration into parts where either the second or third derivative of the phase $- \I t \cos \theta$ is nonzero. 
This gives us the time decay. It remains to show that the derivatives with respect to $\theta$ are elements of $\A$ with $\A$-norms at most proportional to $(|n|+|k|)^2$.
Of course this will in general not be true because of the possible singularity at the other end of the continuous spectrum.
However, since this possible singularity is outside the domain of integration we can redefine our functions there. 

We distinguish the cases (i) $n \le 0 \le k$, (ii) $0 \le n \le k$, and (iii) $n \le k \le 0$.
Introduce the functions
\[
f(\theta)=\frac{\E^{\I\theta}-1}{\sin\theta}, \:\: g(\theta,n,k)=G(\theta,n,k)\E^{-\I |n-k|\theta}, \:\: h(\theta,n,k)= \frac{g(\theta,n,k)-g(0,n,k)}{\E^{\I\theta}-1}.
\]
Then
\begin{align*}
& \frac{d}{d\theta} \frac{G(\theta, n, k)-G(0, n, k)}{\sin \theta}= \frac{d}{d\theta}\left( f(\theta) \frac{\E^{\I (k-n) \theta}-1}{\E^{\I\theta}-1} g(\theta,n,k) +
f(\theta)  h(\theta,n,k)\right)\\
\end{align*}
Since only the values of $f$ for $\theta\in [-\frac{\pi}{2},\frac{\pi}{2}]$ are relevant, we redefine it on $[\frac{\pi}{2},\frac{3\pi}{2}]$ such that
$f,f'\in \A$ (e.g.\ by multiplying with a smooth periodic function which is $1$ on $[-\frac{\pi}{2},\frac{\pi}{2}]$ and vanishes near $\pi$).

We observe that
\[
\frac{\E^{\I (k-n) \theta}-1}{\E^{\I\theta}-1}=\frac{z^{k-n}-1}{z-1}= \sum_{\ell=0}^{k-n-1} z^{\ell} \in \A
\]
with its $\A$-norm bounded by $k-n$, and that of its derivative by $(k-n)^2$. Now it remains to consider the $\A$-norms of $g(\theta,n,k)$, $\frac{d}{d \theta} g(\theta,n,k)$, $h(\theta,n,k)$, and $\frac{d}{d \theta} h(\theta,n,k)$.

We start the with case (i). Then $g(\theta,n,k) \in\A$ with $\A$-norm independent of $n$ and $k$.
After applying the product rule, Lemma~\ref{lem1rcdisc} and Theorem~\ref{MT} imply $\|\frac{d}{d \theta} g(\theta,n,k)\|_{\A}\le C$.
Moreover, by \eqref{bigG}
\begin{align*}
h(\theta,n,k)&=\frac{T(z)-T(1)}{z-1}\vpt_+(z,k) \vpt_-(z,n) 
+\frac{\vpt_+(z,k)-\vpt_+(1,k)}{z-1}\vpt_-(z,n)T(1)\\ &+ \frac{\vpt_-(z,n)-\vpt_-(1,n)}{z-1}\vpt_+(z,k)T(1),\quad z=\E^{\I\theta}.
\end{align*}

Invoking Lemma \ref{lem1rcdisc}, Lemma~\ref{lem2rcdisc}, and Theorem~\ref{thm6rcdisc}, proves that $h$ and its derivative are again contained in $\A$, with $\A$-norms independent of $n$ and $k$, thus we are done in case (i).
In the cases (ii) and (iii) we use the scattering relations \eqref{scatdisc} to get the following representations: 
\[
g(\theta,n,k)=\begin{cases}
 \vpt_+(z,k)\left(R_+(z)\vpt_+(z,n)z^{2n} + \vpt_+(z^{-1},n)\right), & 0\leq n\leq k,  \\[2mm]
 \vpt_-(z,n)\left(R_-(z)\vpt_-(z,k)z^{2k} + \vpt_+(z^{-1},k)\right), & n\leq k\leq 0, 
\end{cases}
\]
where $z=\E^{\I \theta}$. 
Thus $g(\theta,n,k)$ has an $\A$-norm independent of $n$ and $k$, since for any $f\in \A$ and $m \in \Z$, we have that $f(z)z^m \in \A$,
with $\A$-norm independent of $m$. Taking derivatives with respect to $\theta$ we get additional terms from the derivatives of $z^{2n}$ and $z^{2k}$,
so by Lemma~\ref{lem1rcdisc} and Theorem~\ref{MT} it follows that  $\| \frac{d}{d \theta} g(\theta,n,k) \|_{\A} $ is at most proportional to $|n|$ and $|k|$ respectively. 
Finally it remains to consider $h(\theta,n,k)$. In case (ii) $h(\theta,n,k)$ can be represented as follows:
\begin{align*}
h(\theta,n,k)=\frac{\vpt_+(z,k)-\vpt_+(1,k)}{z-1}\vpt_+(z^{-1},n) &+ \frac{\vpt_+(z^{-1},n)-\vpt_+(1,n)}{z-1}\vpt_+(1,k)\\
+\frac{R_+(z)-R_+(1)}{z-1} \vpt_+(z,n) \vpt_+(z,k) z^{2 k}&+\frac{z^{2k}-1}{z-1} R_+(1)\vpt_+(z,n)\vpt_+(z,k)\\
+\frac{\vpt_+(z,n)-\vpt_+(1,n)}{z-1}\vpt_+(z,k)R_+(1) &+ \frac{\vpt_+(z,k)-\vpt_+(1,k)}{z-1}\vpt_+(1,n)R_+(1) , 
\end{align*}
for $z=\E^{\I \theta}$. 
Here again every summand is an element of $\mathcal A$ by Lemma~\ref{lem1rcdisc}, Lemma~\ref{lem2rcdisc}, and Lemma~\ref{thm6rcdisc}. 
Since the derivative of $\frac{z^{2k}-1}{z-1}$ also occurs here, we conclude that the $\mathcal A$-norm of $\frac{\partial}{\partial \theta} h(\theta,n,k)$ is at most proportional to $|k|^2$. 
In the case (iii) this derivative will be proportional to $|n|^2$. 
This indeed proves the claim about $ \frac{G(\theta, n, k)-G(0, n, k)}{\sin \theta} $ for $\theta \in [-\frac{\pi}{ 2}, \frac{\pi}{2}]$. 
Since the same procedure works for $\frac{G(\theta, n, k)-G(\pi , n, k)}{\sin \theta}$, with $\theta \in [\frac \pi 2, \frac{3\pi}{2}]$ this finishes the proof.
\end{proof}

%%%%%%%%%%%%%%%%%%%%%%%%%%%%%%%%%%%%%%%%%%%%%%%%%%%%%%%%%%%%%%%%%%%%%
%\bigskip
\noindent{\bf Acknowledgments.}
I.E.~is indebted to the Department of Mathematics at the University of Vienna for its hospitality and support during the
fall of 2014, where some of this work was done.
%%%%%%%%%%%%%%%%%%%%%%%%%%%%%%%%%%%%%%%%%%%%%%%%%%%%%%%%%%%%%%%%%%%%%

\end{document}